\theoremstyle{definition}
\newtheorem{theorem}{Theorem}[section]
\newtheorem{definition}[theorem]{Definition}
\newtheorem{lemma}[theorem]{Lemma}
\newtheorem{proposition}[theorem]{Proposition}
\newtheorem{corollary}[theorem]{Corollary}
\DeclareMathOperator{\id}{id}
\DeclareMathOperator{\Diff}{Diff}
\DeclareMathOperator{\Sing}{Sing}
\DeclareMathOperator{\vol}{vol}
\newcommand{\RR}{\mathbb{R}}
\newcommand{\ZZ}{\mathbb{Z}}
\newcommand{\GG}{\Gamma}
\renewcommand{\gg}{\gamma}
\newcommand{\omb}{\overline{\omega}}
\newcommand{\phib}{\overline{\phi}}
\newcommand{\psib}{\overline{\psi}}
\newcommand{\A}{A}
\newcommand{\CCC}{\mathrm{C}}
\newcommand{\HHH}{\mathrm{H}}
\newcommand{\rel}{\mathrm{rel}}
\newcommand{\near}{\mathrm{c}}
\newcommand{\Grel}{G_{\rel}}
\newcommand{\Gnear}{G_{\near}}
\newcommand{\BGdel}{BG^{\delta}}
\newcommand{\MGdel}{MG^{\delta}}
\newcommand{\BGbar}{\overline{BG}}
\newcommand{\BDdelM}{B\Diff_{\omega,0}(M,\partial M)^{\delta}}
\newcommand{\MDdel}{M\Diff_{\omega,0}(M,\partial M)^{\delta}}
\newcommand{\NDdel}{N\Diff_{\omega,0}^c(N)^{\delta}}
\newcommand{\BDdelN}{B\Diff_{\omega,0}^c(N)^{\delta}}
\newcommand{\BDbarN}{\overline{B\Diff_{\omega,0}^c(N)}}
\newcommand{\DM}{\Diff_{\omega,0}(M,\partial M)}
\newcommand{\DN}{\Diff_{\omega,0}^c(N)}
\newcommand{\DS}{\Diff_0(S^{n-1})}
\newcommand{\DSdel}{\Diff_0(S^{n-1})^{\delta}}
\newcommand{\DD}{\Diff_{\omega, 0}(D^n)}
\newcommand{\DDrel}{\Diff_{\omega, 0}(D^n, S^{n-1})}
\newcommand{\BDS}{B\Diff_0(S^{n-1})^{\delta}}
\newcommand{\DMful}{\Diff_{\omega,0}(M)}
\newcommand{\BDfuldelM}{B\DMful^{\delta}}
\newcommand{\MDfuldel}{M\DMful^{\delta}}
\newcommand{\Dbdr}{\Diff_0(\partial M)}
\newcommand{\BDD}{B\DD^{\delta}}
\title[McDuff's secondary class and the Euler class]{McDuff's secondary class and the Euler class of foliated sphere bundles}
\author{Shuhei Maruyama}
\address{School of Mathematics and Physics, College of Science and Engineering, Kanazawa University, Kakuma-machi, Kanazawa, Ishikawa, 920-1192, Japan}
\email{smaruyama@se.kanazawa-u.ac.jp}
\keywords{Euler class, groups of volume-preserving diffeomorphisms, simplicial de Rham theory}
\begin{document}

\begin{abstract}
  Tsuboi proved that the Calabi invariant of the closed disk transgresses to the Euler class of foliated circle bundles and suggested looking for its higher-dimensional analog.
  In this paper, we construct a cohomology class of the group of volume-preserving diffeomorphisms of a real-cohomologically acyclic manifold with sphere boundary, which is closely related to McDuff's secondary class, and prove that this cohomology class transgresses to the Euler class of foliated sphere bundles.
\end{abstract}

\maketitle

\section{Introduction}\label{sec:intro}

Let $\DS$ be the identity component of the group of orientation-preserving $C^{\infty}$-diffeomorphisms of the sphere $S^{n-1}$.
Let $\BDS$ denote the classifying space of $\DS^{\delta}$ (for a topological group $G$, $G^{\delta}$ denotes the group $G$ with the discrete topology).
Let $e \in \HHH^{n}(\BDS)$ be the Euler class of foliated $S^{n-1}$-bundles with coefficients in $\RR$.
Let $D^n$ be the $n$-dimensional closed ball with a volume form $\omega$.
It is known that if the holonomy of a foliated $D^n$-bundle preserves the volume form $\omega$, then the Euler class of the bundle is zero (\cite{MR1183731} and \cite{misc/24370500}; see also \cite{tsuboi00}).
In other words, we have
\begin{align*}
  r^*e = 0 \in \HHH^{n}(\BDD),
\end{align*}
where $r \colon \BDD \to \BDS$ is the map induced by the restriction to the boundary.

For a fibration $F \xrightarrow{i} E \xrightarrow{p} B$, a class $x \in \HHH^{n-1}(F)$ is said to \emph{transgress to} $y \in \HHH^n(B)$ if there exist an $(n-1)$-cochain $u$ on $E$ and an $n$-cocycle $v$ representing $y$ such that
\begin{align*}
  du = p^* v \ \text{ and } \ [i^* u] = x,
\end{align*}
where $d$ is the coboundary map of the cochain complex we use (see \cite[Section 9]{MR72426}).
Note that if a class $y \in \HHH^n(B)$ satisfies $p^*y = 0$, then there exists an element $x \in \HHH^{n-1}(F)$ which transgresses to $y$.

Let $\DDrel$ be the identity component of the group of volume-preserving $C^{\infty}$-diffeomorphisms of $D^n$ which fix the boundary $S^{n-1}$ pointwise.
Considering the fibration $B\DDrel^{\delta} \to \BDD \xrightarrow{r} \BDS$ and the fact $r^* e = 0$ stated above, we can ask which class transgresses to the Euler class $e$.
For the case of the disk $D^2$ and the boundary $S^1$, Tsuboi essentially proved in \cite{tsuboi00} that the Calabi invariant, which is a non-zero element of $\HHH^1(B\Diff_{\omega,0}(D^2, S^1)^{\delta})$, transgresses to the Euler class of foliated $S^1$-bundles up to a non-zero constant multiple (see also \cite{bowden11} and \cite{Moriyoshi16}).
Tsuboi (\cite{tsuboi00}) also suggested looking for analogous results in higher-dimensional cases.

In the present paper, we consider the above problem in the following slightly generalized setting.
Let $M$ be a compact connected orientable $n$-dimensional manifold with a volume form $\omega$.
We assume that the boundary $\partial M$ is diffeomorphic to the sphere $S^{n-1}$ and that $\HHH^l(M,\partial M) = 0$ for every $0 < l < n$.
Let $\DMful$ be the identity component of the group of volume-preserving $C^{\infty}$-diffeomorphisms of $M$.
The boundary-restricting map $\DMful \to \Dbdr$ induces the fibration
\begin{align*}
  B\DM^{\delta} \xrightarrow{j} B\DMful^{\delta} \xrightarrow{r} B\Dbdr^{\delta},
\end{align*}
where $\DM$ is the identity component of the subgroup of $\DMful^{\delta}$ whose elements fix the boundary pointwise.

In Section \ref{sec:cocycle}, we introduce a cohomology class $c(M) \in \HHH^{n-1}(\BDdelM)$. 
Note that if $n=2$, the class $c(M)$ coincides with the Calabi invariant.
Hence Tsuboi's theorem asserts that the class $c(D^2)$ for the closed disk $D^2$ transgresses to the Euler class of foliated $S^1$-bundles.
The main result of this paper is the following, which provides a higher-dimensional analog of Tsuboi's theorem.
\begin{theorem}\label{thm:cM_transgression}
  The class $c(M)$ transgresses to the Euler class $e \in \HHH^n(B\DS^{\delta})$ of foliated $S^{n-1}$-bundles up to a non-zero constant multiple.
\end{theorem}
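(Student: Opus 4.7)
The plan is to work throughout in the simplicial de Rham framework and to construct an explicit cochain on the total space of the fibration whose restriction to the fiber represents $c(M)$ and whose coboundary pulls back from the base; this realizes the transgression at the cochain level.

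First I take the explicit cocycle representative of $c(M)$ constructed in Section~\ref{sec:cocycle}. That cocycle is assembled from the volume form $\omega$ and a chosen primitive $\alpha$ with $d\alpha = \omega$ (which exists because $\HHH^n(M;\RR)=0$) through Bott--Shulman--Stasheff-style integration over $M$ combined with the simplices parametrizing group cochains on $\DM$. Crucially the formula only uses $\omega$-preservation, not the boundary-fixing condition, so evaluating it on tuples from the larger group $\DMful$ produces a well-defined $(n-1)$-cochain $u$ on $\BDfuldelM$ whose restriction $j^*u$ is the chosen cocycle for $c(M)$.

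Next I compute $du$. Since $u$ restricts to a cocycle on $\DM$, any failure of closedness on $\DMful$ must come from the fact that diffeomorphisms no longer fix $\partial M$. Applying Stokes' theorem over the $M$-factor in $\Delta^{k}\times M$ converts every contribution to $du$ into an integral over $\partial M = S^{n-1}$ depending only on the boundary restrictions of the diffeomorphisms together with $\omega|_{\partial M}$ and $\alpha|_{\partial M}$. This exhibits $du = r^{*}v$ for an explicit $n$-cochain $v$ on $B\Dbdr^{\delta}$, which is automatically a cocycle, and by the definition recalled in the excerpt, $c(M)$ transgresses to $[v]$.

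The essential task is then to identify $[v] \in \HHH^n(B\Dbdr^{\delta})$ with a nonzero multiple of $e$. Since $v$ depends only on boundary data, its cohomology class is independent of the particular $M$ satisfying the hypotheses, so one may reduce to the model $M = D^n$ with the standard volume form. In that model the formula for $v$ matches a standard simplicial de Rham representative of the Euler class of flat $S^{n-1}$-bundles obtained from the round volume form and integration along the fiber, in the spirit of Dupont's Chern--Weil construction, yielding $[v] = \lambda\, e$ for some explicit $\lambda \in \RR$.

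The main obstacle is showing $\lambda \neq 0$. I would verify this by pairing both sides against a test cycle realized by a foliated $S^{n-1}$-bundle of nonzero Euler number---for example, a suspension of a rotation in $SO(n)$, or, following Tsuboi in the $n=2$ case, a dimensional reduction of a $\PSL(2,\RR)$-bundle---and computing directly that the explicit integral defining $v$ evaluates nontrivially. Equivalently one can track the constant through Dupont's isomorphism applied to the volume-form representative, reducing the question to an elementary (and clearly nonzero) integral over $S^{n-1}$, after which the proof is complete.
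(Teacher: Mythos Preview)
Your overall strategy---extend the cocycle to $\DMful$, compute its coboundary, show it descends to $\Dbdr$, identify the result with the Euler class---matches the paper's. But two steps are glossed over in ways that hide the real work, and your Stokes mechanism is not what actually happens.

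First, the extension to $\DMful$ is not automatic. The cocycle $\int_M \phi^{(0)}\wedge\omega$ from Section~\ref{sec:cocycle} depends on a zig-zag $\phi^{(n-1)},\ldots,\phi^{(0)}$ built using $\HHH^l(M,\partial M)=0$; at each stage one chooses $\phi^{(l)}(g_1,\ldots)\in\Omega^l(M,\partial M)$, and this works precisely because the $g_i\in\DM$ fix $\partial M$, so $\delta$ preserves the relative subcomplex. For $g_i\in\DMful$ that fails, and the paper must rerun the zig-zag with a weaker demand (Lemma~\ref{lem:phib_boundary_coincide}): one can only arrange that the boundary restriction $i^*\phib^{(l)}$ depends solely on the $g_k|_{\partial M}$. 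Achieving this at each inductive step uses $\HHH^l(S^{n-1})=0$ for $0<l<n-1$; this is exactly where the hypothesis $\partial M\cong S^{n-1}$ enters, and your write-up never invokes it.

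Second, the descent of $du$ is not a Stokes argument. One has $du=\delta u=\int_M(\delta\phib^{(0)})\wedge\omega$, still an integral over all of $M$; no boundary integral appears. What makes it descend is that $\delta\phib^{(0)}$ is a $d''$-closed $0$-form, hence a \emph{constant}, and by the property just arranged this constant depends only on the $g_k|_{\partial M}$. Thus $du=\vol(M)\cdot r^*\chi$ for a cocycle $\chi$ on $\Dbdr$ (Lemma~\ref{lem:delphi_descend}, Theorem~\ref{thm:transgression}).

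For the final identification the paper avoids test cycles and any reduction to $D^n$: the same zig-zag $\phib^{(l)}$ restricts along $i$ to a zig-zag $\psib^{(l)}$ on the double complex for $S^{n-1}$, which by definition computes the differential $d_n^{0,n-1}$ in the Serre spectral sequence of the universal flat $S^{n-1}$-bundle and lands on $[\chi]$. Since $\psib^{(n-1)}=i^*\phi$ and $\int_{S^{n-1}}i^*\phi=\int_M d\phi=\vol(M)$ by Stokes (this is where Stokes actually enters), one reads off $[\chi]=-\vol(M)\cdot e$ with the constant manifestly nonzero (Proposition~\ref{prop:euler_cocycle}). Your pairing-against-test-cycles plan could in principle work, but it is both more laborious and leaves the nonvanishing of $\lambda$ as a separate computation, whereas here it falls out for free.
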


If $n$ is odd, then the real Euler class $e$ is zero.
We prove that $c(M) = 0$ if $n$ is odd (Proposition \ref{prop:cM_zero}).
Hence the theorem is trivial when $n$ is odd.


We also clarify a relation between the class $c(M)$ and McDuff's secondary class.
Let $N$ be the interior of $M$ equipped with the volume form which is the restriction of $\omega$ to $N$ (denoted by the same symbol $\omega$).
Let $\DN$ be the identity component of the group of compactly supported $C^{\infty}$-diffeomorphisms of $N$ which preserve the volume form $\omega$.
Let $\BDbarN$ be the homotopy fiber of $\BDdelN \to B\DN$.
Then McDuff's secondary class $s(N)$ is defined as an $(n-1)$-th cohomology class of $\BDbarN$ (see Section \ref{subsec:McDuff_class}).

Let $\iota \colon \DN \to \DM$ be the inclusion given by extending diffeomorphisms by the identity on $\partial M$.
Then we have the maps
\begin{align*}
  \HHH^{n-1}(\BDdelM) \xrightarrow{\iota^*} \HHH^{n-1}(\BDdelN) \xrightarrow{f^*} \HHH^{n-1}(\BDbarN).
\end{align*}

\begin{theorem}\label{thm:cM=sN}
  The pullback $f^*\iota^* c(M)$ coincides with McDuff's secondary class $s(N)$.
\end{theorem}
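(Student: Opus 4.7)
The plan is to work at the cochain level in the simplicial de Rham bicomplex and identify explicit representatives for both sides. Since the class $c(M)$ constructed in Section~\ref{sec:cocycle} is given by a specific cocycle built from a primitive of the volume form $\omega$ (using the relative acyclicity hypothesis on $M$), and McDuff's class $s(N)$ admits a similar description in terms of a primitive of $\omega$ on $N$, the strategy is to show that the two cocycles agree up to the coboundary of a cochain that vanishes in $\HHH^{n-1}(\BDbarN)$.

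As a first step, I would fix a primitive $\alpha \in \Omega^{n-1}(M)$ with $d\alpha = \omega$ and record an explicit representative of $c(M)$ on the nerve $N_\bullet \DM$ using the formula from Section~\ref{sec:cocycle}. Second, I would analyze $\iota^*$: since an element $\phi \in \DN$ extends by the identity on $\partial M$, the pullback $\phi^*\alpha$ agrees with $\alpha$ outside the compact support of $\phi$, so the cocycle $\iota^* c(M)$ is built from compactly supported data on $N$ and has the same structural shape as a McDuff-type cocycle. Third, I would recall the cochain description of $s(N)$ from Section~\ref{subsec:McDuff_class} and write both cocycles in a common simplicial de Rham model.

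The heart of the argument is then to handle the dependence on the chosen primitive. Any two primitives of $\omega$ on $N$ differ by a closed form, and the resulting ambiguity contributes a cocycle on $\BDdelN$ that is the pullback of a class from $B\DN$ along the canonical map $\BDdelN \to B\DN$. Since $\BDbarN$ is by definition the homotopy fiber of this canonical map, any such class becomes trivial under $f^*$, so the two representatives descend to the same cohomology class on $\BDbarN$.

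The main obstacle is performing this comparison coherently at the cochain level, since the simplicial de Rham representatives mix horizontal (group) and vertical (manifold) directions, and one must track boundary contributions from $\partial M$ as well as the choices of primitive across simplicial degrees. In particular, I expect the central technical lemma to be that the difference between the cocycle induced from $\alpha|_N$ and the cocycle used in McDuff's construction is the total differential in the bicomplex of a cochain whose horizontal component is pulled back from $B\DN$; this is exactly the input needed to conclude that $f^*\iota^*c(M) - s(N) = 0$ in $\HHH^{n-1}(\BDbarN)$.
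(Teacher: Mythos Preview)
Your proposal misidentifies where the real work lies. The paper's proof is not a comparison of two different primitives followed by an argument that the discrepancy dies on the homotopy fiber; it is a direct cochain-level identification. The key point you have missed is that McDuff's $\Omega \in A^n(\BDbarN_\bullet \times N)$ is \emph{not} the pullback of the volume form from $N$: it is defined via $f_\sigma(t,x)=\sigma(t)^{-1}(x)$ and therefore has nontrivial components $\Omega^{(l)}$ in simplex-directions for $l<n$. Consequently, a $\Phi$ with $d\Phi=\Omega$ is not the same object as ``a primitive $\alpha$ of $\omega$ on $N$'', and the zig-zag element $\phi=\phi^{(0)}+\cdots+\phi^{(n-1)}$ used to define $c(M)$ lives in yet another complex, namely $\mathscr{A}^{n-1}(\MDdel_\bullet)$. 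Your step ``write both cocycles in a common simplicial de Rham model'' is precisely the entire content of the theorem, and your sketch gives no mechanism for it.

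What the paper actually does is construct explicit simplicial maps $F\colon \|\BDbarN_\bullet\times N\|\to\|\NDdel_\bullet\|$ and $f\colon \|\BDbarN_\bullet\|\to\|\BDdelN_\bullet\|$ and verify the identity $F^*\mathscr{E}(\omega^{(n)})=\Omega$. It follows that $\Phi:=F^*\mathscr{E}(\phi)$ satisfies $d\Phi=\Omega$ and the required support conditions, so this $\Phi$ is an admissible choice in McDuff's construction. One then checks $\mathscr{E}(\phi\wedge\omega^{(n)})=\mathscr{E}(\phi)\wedge\mathscr{E}(\omega^{(n)})$ and that integration along $N$ commutes with both $\mathscr{E}$ and $F^*$ (this is the technical Lemma~\ref{lem:commutativity_int_N}), whence $f^*\mathscr{E}\bigl(\int_N\phi\wedge\omega^{(n)}\bigr)=\int_N\Phi\wedge\Omega$ on the nose. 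No ``difference of primitives'' ever appears, and your proposed lemma about the ambiguity being pulled back from $B\DN$ is both unmotivated and unnecessary.
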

McDuff proved in \cite{MR0678355} that the class $s(N)$ is non-zero if and only if $n$ is even.
Hence this, together with Proposition \ref{prop:cM_zero}, implies the following.
\begin{corollary}\label{cor:cM_nonzero}
  The class $c(M)$ is non-zero if and only if $n$ is even.
\end{corollary}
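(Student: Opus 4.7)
The plan is to assemble Corollary \ref{cor:cM_nonzero} as a direct consequence of the three ingredients already isolated in the introduction: Theorem \ref{thm:cM=sN}, McDuff's non-vanishing result for $s(N)$, and Proposition \ref{prop:cM_zero}. There is essentially no new content to produce; the work is to verify that the two implications line up correctly.

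For the ``only if'' direction I would argue as follows. Suppose $c(M) \neq 0$. By naturality, the pullback $f^*\iota^* c(M) \in \HHH^{n-1}(\BDbarN)$ is well-defined, and Theorem \ref{thm:cM=sN} identifies it with McDuff's secondary class $s(N)$. If $n$ were odd, then Proposition \ref{prop:cM_zero} would force $c(M) = 0$, contradicting our assumption. Hence $n$ must be even.

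For the ``if'' direction I would start from the assumption that $n$ is even. McDuff's theorem from \cite{MR0678355} then guarantees that $s(N) \neq 0$ in $\HHH^{n-1}(\BDbarN)$. Applying Theorem \ref{thm:cM=sN} in the form $s(N) = f^*\iota^* c(M)$, the non-vanishing of $s(N)$ propagates back through the pullback: if $c(M)$ were zero in $\HHH^{n-1}(\BDdelM)$, then the composition $f^*\iota^*$ would send it to zero, contradicting $s(N) \neq 0$. Therefore $c(M) \neq 0$.

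The main thing to be careful about is simply the logical packaging — the ``if and only if'' splits into two statements whose proofs use different inputs (Proposition \ref{prop:cM_zero} on one side, Theorem \ref{thm:cM=sN} combined with McDuff's non-vanishing theorem on the other). There is no genuine obstacle beyond citing the correct result in each direction; in particular, one need not know anything about the map $f^*\iota^*$ other than that it is a homomorphism of cohomology groups, so that a zero class is sent to a zero class.
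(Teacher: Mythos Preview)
Your proposal is correct and matches the paper's own reasoning exactly: the paper simply states that McDuff's non-vanishing result for $s(N)$ (combined with Theorem \ref{thm:cM=sN}) together with Proposition \ref{prop:cM_zero} yields the corollary. The only cosmetic point is that your first sentence in the ``only if'' direction, invoking Theorem \ref{thm:cM=sN}, is unnecessary there---Proposition \ref{prop:cM_zero} alone handles that implication---but this does not affect correctness.
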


Note that the definition of the class $c(M)$, Theorem \ref{thm:cM=sN} and Corollary \ref{cor:cM_nonzero} do not need the assumption $\partial M \cong S^{n-1}$ (see the beginning of Sections \ref{sec:cocycle} and \ref{sec:proof_cM=sN}).


\section{Preliminalies}

\subsection{Simplicial manifolds}\label{subsec:semisimp_mfd}
In this section, we briefly review the notion of simplicial manifold (see \cite{D} for details).

A \emph{simplicial manifold} $X_{\bullet}$ is a sequence $X_{\bullet} = \{ X_p \}_{p\geq 0}$ of manifolds endowed with smooth maps $\epsilon_i \colon X_p \to X_{p-1}$ 
and $\eta_i \colon X_p \to X_{p+1}$ for 
$0 \leq i \leq p$ satisfying
\begin{align*}
  &\epsilon_i \epsilon_j = \epsilon_{j-1} \epsilon_i \  \text{ for } i < j, \\
  &\eta_i \eta_j = \eta_{j+1} \eta_j \  \text{ for } i \leq j, \text{ and } \\
  &\epsilon_i \eta_j =
  \begin{cases}
    \eta_{j-1} \epsilon_i & \text{ for } i < j \\
    \id_{X_p} & \text{ for } i = j, j+1 \\
    \eta_{j} \epsilon_{i-1} & \text{ for } i > j+1.
  \end{cases}
\end{align*}
The \emph{(fat) realization $\| X_{\bullet} \|$ of a simplicial manifold $X_{\bullet}$} is the quotient space of $\coprod_p \Delta^p \times X_p$ with the identifications $(\epsilon^i(t), x) \sim (t, \epsilon_i(x))$.
Here $\Delta^p \subset \RR^{p+1}$ is the standard $p$-simplex given by
\begin{align*}
  \Delta^p = \{ (t_0, \cdots, t_p) \in \RR^{p+1} \mid t_0 + \cdots + t_p = 1, t_i \geq 0 \text{ for each } i \}
\end{align*}
and $\epsilon^i \colon \Delta^{p-1} \to \Delta^{p}$ is the $i$-th face map given by
\begin{align*}
  \epsilon^i(t_0, \cdots, t_{p-1}) = (t_0, \cdots, t_{i-1}, 0, t_i, \cdots, t_{p-1})
\end{align*}
for $0 \leq i \leq p$.
We define $\eta^i \colon \Delta^{p+1} \to \Delta^{p}$ by
\begin{align*}
  \eta^i(t_0, \cdots, t_{p+1}) = (t_0, \cdots, t_{i-1}, t_i + t_{i+1}, t_{i+2}, \cdots, t_{p+1})
\end{align*}
for $0 \leq i \leq p$.

Let $G$ be a (possibly infinite-dimensional) Lie group which acts on a manifold $M$ by $C^{\infty}$-diffeomorphisms.
These data induce four simplicial manifolds, $\BGdel_{\bullet}$, $\MGdel_{\bullet}$, $\BGbar_{\bullet}$ and $\BGbar_{\bullet}\times M$, as follows.

For $p \geq 0$, we set $\BGdel_p = (G^{\delta})^p$ and define $\epsilon_i \colon \BGdel_{p} \to \BGdel_{p-1}$ and $\eta_i \colon \BGdel_{p} \to \BGdel_{p+1}$ by
\begin{align*}
  \epsilon_i(g_1, \cdots, g_p) =
  \begin{cases}
    (g_2, \cdots, g_p) & i = 0 \\
    (g_1, \cdots, g_i g_{i+1}, \cdots, g_p) & 1 \leq i \leq p-1 \\
    (g_1, \cdots, g_{p-1}) & i = p
  \end{cases}
\end{align*}
and
\begin{align*}
  \eta_i(g_1, \cdots, g_p) = (g_1, \cdots, g_i, \id, g_{i+1}, \cdots, g_{p}),
\end{align*}
where $\id$ denotes the unit element of $G$.
Here we regard $\BGdel_p$ as a $0$-dimensional manifold.
Similarly, we set $\MGdel_p = (G^{\delta})^p \times M$ and define $\epsilon_i \colon \MGdel_{p} \to \MGdel_{p-1}$ and $\eta_i \colon \MGdel_{p} \to \MGdel_{p+1}$ by
\begin{align*}
  \epsilon_i(g_1, \cdots, g_p; x) =
  \begin{cases}
    (g_2, \cdots, g_p; x) & i = 0 \\
    (g_1, \cdots, g_i g_{i+1}, \cdots, g_p; x) & 1 \leq i \leq p-1 \\
    (g_1, \cdots, g_{p-1}; g_p(x)) & i = p
  \end{cases}
\end{align*}
and
\begin{align*}
  \eta_i(g_1, \cdots, g_p; x) = (g_1, \cdots, g_i, \id, g_{i+1}, \cdots, g_{p}; x).
\end{align*}
Then it is directly checked that $\BGdel_{\bullet} = \{ \BGdel_p \}_{p \geq 0}$ and $\MGdel_{\bullet} = \{ \MGdel_p \}_{p \geq 0}$ are simplicial manifolds.

For $p \geq 0$, let $\Sing^p(G)$ be the set of $C^{\infty}$ singular $p$-simplices in $G$.
We set $\BGbar_p = \Sing^p(G)/G^{\delta}$, where the group $G^{\delta}$ acts on $\Sing^p(G)$ by the multiplication on the right.
Here we regard $\BGbar_p$ as a $0$-dimensional manifold.
We define $\epsilon_i \colon \BGbar_{p} \to \BGbar_{p-1}$ and $\eta_i \colon \BGbar_{p} \to \BGbar_{p+1}$ by $\epsilon_i([\sigma]) = [\sigma \circ \epsilon^i]$ and $\eta_i([\sigma]) = [\sigma \circ \eta^i]$, respectively.
Similarly, we define $\epsilon_i \colon \BGbar_{p} \times M \to \BGbar_{p-1} \times M$ and $\eta_i \colon \BGbar_{p} \times M \to \BGbar_{p+1} \times M$ by $\epsilon_i([\sigma],x) = ([\sigma \circ \epsilon^i],x)$
and $\eta_i([\sigma],x) = ([\sigma \circ \eta_i],x)$, respectively.
Then $\BGbar = (\BGbar_p)_{p \geq 0}$ and $\BGbar_{\bullet} \times M = (\BGbar_p \times M)_{p \geq 0}$ turn out to be simplicial manifolds.

It is known that the realizations, $\| \BGdel_{\bullet} \|$, $\| \MGdel_{\bullet} \|$, $\| \BGbar_{\bullet} \|$ and $\| \BGbar_{\bullet} \times M \|$, provide models for $BG^{\delta}$, $EG^{\delta} \times_{G^{\delta}} M$, $\BGbar$ and $\BGbar \times M$,
respectively (see \cite{D}, \cite{MR505649} and \cite{Mather2011}).

\subsection{Simplicial de Rham theory}
In this section, we briefly review simplicial de Rham theory (see \cite{D} for details).

First we recall the notion of double complex.
A \emph{(first quadrant) double complex} $(A^{*,*}, d', d'')$ is a family $A^{*,*} = \{ A^{p,q} \}_{p,q \geq 0}$ of modules equipped with differentials
\begin{align*}
  d' \colon A^{p,q} \to A^{p+1, q} \ \text{ and } \ d'' \colon A^{p,q} \to A^{p,q+1}
\end{align*}
for every $p,q \geq 0$ with $d' \circ d' = 0, d'' \circ d'' = 0$ and $d' \circ d'' + d'' \circ d' = 0$.
We set
\begin{align*}
  A^n = \coprod_{p+q=n} A^{p,q}
\end{align*}
for every $n \geq 0$.
Then, the sum $d' + d''$ induces a cochain complex structure on $A^* = \{ A^n \}_{n \geq 0}$.
This cochain complex $(A^*, d)$ is called the \emph{total complex of a double complex $(A^{*,*}, d', d'')$}.
\begin{definition}\label{def:notation_decomp}
  Let $(A^{*,*}, d', d'')$ be a double complex and $(A^*,d)$ its total complex.
  Let $n$ be a non-negative integer.
  For an element $a \in A^{n}$, let $\{ a^{(l)} \}_{0 \leq l \leq n}$ be such that $a = a^{(0)} + \cdots + a^{(n)}$ and $a^{(l)} \in A^{n-l,l}$ for every $l$.
\end{definition}

Let $X_{\bullet}$ be a simplicial manifold.
For an integer $m \geq 0$, a \emph{simplicial $m$-form $\lambda$ on $X_{\bullet}$} is a family $\lambda = \{ \lambda_p \}_{p \geq 0}$ of $m$-forms $\lambda_p \in \Omega^m(\Delta^p \times X_p)$ satisfying
\begin{align*}
  (\epsilon^i \times \id)^* \lambda_p = (\id \times \epsilon_i)^* \lambda_{p-1} \in \Omega^m(\Delta^{p-1} \times X_p)
\end{align*}
for every $p \geq 0$ and $0 \leq i \leq p$.
Let $A^m(X_{\bullet})$ be the set of simplicial $m$-forms on $X_{\bullet}$.
The exterior derivative $d \colon \Omega^m(\Delta^p \times X_p) \to \Omega^{m+1}(\Delta^p \times X_p)$ induces the map $d \colon A^m(X_{\bullet}) \to A^{m+1}(X_{\bullet})$, which gives a cochain complex $(A^*(X_{\bullet}), d)$.

This cochain complex $(A^*(X_{\bullet}), d)$ can be seen as the total complex of a double complex $(A^{*,*}(X_{\bullet}), d_{\Delta}, d_X)$ defined as follows.
For non-negative integers $k$ and $l$ with $k+l = m$, let $A^{k,l}(X_{\bullet})$ be the set of simplicial $m$-forms $\lambda = \{ \lambda_p \}_{p \geq 0}$ which has degree $l$ in the $X_p$-variables for every $p \geq 0$.
The map $d_{\Delta} \colon A^{k,l}(X_{\bullet}) \to A^{k+1,l}(X_{\bullet})$ is the exterior derivative with respect to the variables in the standard simplex $\Delta^p$ and the map $d_X \colon A^{k,l}(X_{\bullet}) \to A^{k,l+1}(X_{\bullet})$ is $(-1)^k$ times the exterior derivative with respect to the $X_p$-variables on each $\Delta^p \times X_p$.

There is another double complex $(\mathscr{A_{\bullet}}^{*,*}(X), \delta, d'')$ defined as follows.
For a simplicial manifold $X_{\bullet}$, we set $\mathscr{A}^{k,l}(X_{\bullet}) = \Omega^l(X_k)$.
Let $\delta \colon \mathscr{A}^{k,l}(X_{\bullet}) \to \mathscr{A}^{k+1,l}(X_{\bullet})$ be the map defined by
\begin{align*}
  \delta = \sum_{i = 0}^{k+1} (-1)^i \epsilon_i^*
\end{align*}
and $d'' \colon \mathscr{A}^{k,l}(X_{\bullet}) \to \mathscr{A}^{k,l+1}(X_{\bullet})$ be $(-1)^k$ times the exterior derivative of the de Rham complex $\Omega^*(X_k)$.
Then $(\mathscr{A}^{*,*}(X_{\bullet}), \delta, d'')$ gives rise to a double complex.
Let $(\mathscr{A}^*(X_{\bullet}), d)$ denote its total complex.

\begin{theorem}[\cite{MR413122}]\label{thm:simplicial_derham_thm}
  There are isomorphisms
  \begin{align*}
    \HHH^*(\mathscr{A}^*(X_{\bullet})) \to \HHH^*(A^*(X_{\bullet})) \to \HHH^*(\| X_{\bullet} \|).
  \end{align*}
\end{theorem}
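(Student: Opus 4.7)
The plan is to follow Dupont's classical strategy, constructing explicit chain maps for the two arrows and proving each is a quasi-isomorphism via a spectral-sequence/skeletal comparison.

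For the first arrow $\mathscr{A}^*(X_\bullet) \to A^*(X_\bullet)$, I would use the Whitney-type map $E$ built from barycentric coordinates $(t_0,\ldots,t_p)$ on $\Delta^p$: given $\mu=\{\mu_k\in\Omega^l(X_k)\}$, define $E(\mu)_p$ as an appropriate signed sum of terms of the form $t_{i_0}\,dt_{i_1}\wedge\cdots\wedge dt_{i_k}$ wedged with the face-pullbacks to $\Delta^p\times X_p$ of $\mu_k$ along the iterated face maps $\epsilon_{i_0\ldots i_k}$. The natural companion is the fibre-integration $I\colon A^{k,l}(X_\bullet)\to\mathscr{A}^{k,l}(X_\bullet)$ given by $I(\lambda)=\int_{\Delta^k}\lambda_k$. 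One checks $I\circ d_X=d''\circ I$ immediately from the fact that $d_X$ acts only in the $X_p$-variables, and $I\circ d_\Delta=\delta\circ I$ via Stokes' theorem on $\Delta^k$ combined with the simplicial compatibility $(\epsilon^i\times\id)^*\lambda_k=(\id\times\epsilon_i)^*\lambda_{k-1}$, which turns the boundary integrals into $\sum(-1)^i\epsilon_i^*\int_{\Delta^{k-1}}\lambda_{k-1}$. One verifies $I\circ E=\id$ directly, and then shows $I$ is a quasi-isomorphism by filtering both double complexes by the $X$-degree $l$: on the $E_1$-page of $A^{*,l}$, the Poincaré lemma on the convex simplex $\Delta^p$ (applied after imposing the simplicial compatibility) collapses each column to $\Omega^l(X_p)$ concentrated in $\Delta$-degree zero, matching $\mathscr{A}^{*,l}$ termwise; the spectral-sequence comparison theorem then yields the first isomorphism, from which it follows that $E$ too is a quasi-isomorphism.

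For the second arrow $A^*(X_\bullet)\to C^*(\|X_\bullet\|;\RR)$, I would define $J(\lambda)(\sigma)=\int_{\Delta^n}\sigma^*\lambda_p$ for a smooth singular simplex $\sigma\colon\Delta^n\to\|X_\bullet\|$ factoring through a cell $\Delta^p\times X_p$; Stokes' theorem together with the simplicial compatibility of $\lambda$ produce the chain-map property. To prove $J$ is a quasi-isomorphism, I would stratify $\|X_\bullet\|$ by its skeleta $F_p$ built from the cells $\Delta^q\times X_q$ with $q\leq p$, and induct: the ordinary de Rham theorem on $X_p$ combined with the Poincaré lemma on $\Delta^p$ identifies the local contributions on $(F_p,F_{p-1})$, and a five-lemma comparison of the resulting long exact sequences promotes the isomorphism inductively to all of $\|X_\bullet\|$.

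The main obstacle will be the sign bookkeeping needed so that $E$, $I$ and $J$ are honest chain maps for the double-complex differentials; in particular the $(-1)^k$ convention built into $d_X$ and $d''$ forces specific choices in the signs of $E$ and $I$ that must be compatible with the Stokes boundary signs. A secondary technical point is verifying that smooth-singular cohomology of the fat realisation, which is a space obtained by gluing manifolds with corners along faces, agrees with the ordinary singular cohomology of $\|X_\bullet\|$; this is standard but essential for $J$ to land in the claimed target. Once these are in place, the entire argument reduces to repeated applications of Stokes' theorem, the Poincaré lemma on $\Delta^p$, and the usual de Rham theorem on each $X_p$.
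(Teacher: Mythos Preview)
The paper does not give its own proof of this theorem; it is stated with a citation to Dupont and the only additional content is the explicit formula for the map $\mathscr{E}$, which is exactly your Whitney-type map $E$. Your sketch is essentially Dupont's original argument (integration $I$ as a one-sided inverse, Poincar\'e lemma on $\Delta^p$ plus a spectral-sequence comparison for the first arrow, and a skeletal/Stokes argument for the second), so it agrees with the reference the paper invokes.
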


An isomorphism $\HHH^*(\mathscr{A}^*(X_{\bullet})) \to \HHH^*(A^*(X_{\bullet}))$ is given as follows.
For a sequence $I = (i_0, \cdots, i_k)$ of integers $0 \leq i_0 < \cdots < i_k \leq p$, we set
\begin{align*}
  \mu_{I} = \epsilon_{j_1} \cdots \epsilon_{j_{p-k}} \colon X_p \to X_k,
\end{align*}
where $0 \leq j_1 < \cdots < j_{p-k} \leq p$ is the complementary sequence of $I$.
For $\alpha \in \mathscr{A}^{k,l}(X_{\bullet})$, we define a simplicial $(k+l)$-form $\mathscr{E}(\alpha) = \{ \mathscr{E}(\alpha)_p \}_{p \geq 0}$ by
\begin{align*}
  \mathscr{E}(\alpha)_p =
  \begin{cases}
    0 & \text{ if } p < k \\
    \displaystyle k! \sum_{|I| = k}\left(\sum_{j=0}^k(-1)^j t_{i_j}dt_{i_0} \wedge \cdots \wedge \widehat{dt}_{i_j} \wedge \cdots \wedge dt_{i_k}\right)\wedge \pi_p^* \mu_I^* \alpha & \text{ if } p \geq k,
  \end{cases}
\end{align*}
where the hat means that the term under the hat is omitted and $\pi_p \colon \Delta^p \times X_p \to X_p$ is the projection.
This defines a map $\mathscr{E} \colon \mathscr{A}^*(X_{\bullet}) \to A^*(X_{\bullet})$, which induces an isomorphism $\HHH^*(\mathscr{A}^*(X_{\bullet})) \to \HHH^*(A^*(X_{\bullet}))$.

\subsection{McDuff's secondary class}\label{subsec:McDuff_class}
In this section we briefly explain the definition and properties of the secondary class $s(N)$ constructed by McDuff \cite{MR0678355}.

Let $N$ be a connected orientable $n$-dimensional $C^{\infty}$-manifold without boundary and with a volume form $\omega$.
We assume that $N$ is noncompact and that $\HHH_c^i(N) = 0$ for all $i < n$.
Let $\Diff_{\omega,0}^c(N)$ be the identity component of the group of compactly supported $C^{\infty}$-diffeomorphisms of $N$ which preserve $\omega$.

Recall the notation from Definition \ref{def:notation_decomp}.
A simplicial $m$-form $\alpha = \alpha^{(0)} + \cdots + \alpha^{(m)}$ on $\BDbarN_{\bullet} \times N$ is said to be \emph{compactly supported} if the restriction of each $\alpha^{(l)}$ to $\Delta^p \times \{ S \} \times N$ has compact support for every $p \geq 0$ and for every $S \in \BDbarN_{p}$.


For a $p$-simplex $S \in \BDbarN_p$, we take a representative $\sigma \in \Sing^p(\Diff_{\omega,0}^c(N))$ of $S$.
Let $f_{\sigma} \colon \Delta^p \times \{ S \} \times N \to N$ be the map defined by
\begin{align*}
  f_{\sigma}(t,x) = \sigma(t)^{-1}(x).
\end{align*}
Note that the pullback $f_{\sigma}^* \omega \in \Omega^n(\Delta^p \times \{ S \} \times N)$ does not depend on the choice of the representatives of $S$.
Hence we set $\Omega(S) = f_{\sigma}^* \omega$.
Then $\Omega(S)$ induces an $n$-form $\Omega_p \in \Omega^n(\Delta^p \times \BDbarN_p \times N)$, and these $\Omega_p$'s give rise to a simplicial $n$-form $\Omega = \{ \Omega_p \}_{p \geq 0} \in A^n(\BDbarN_{\bullet} \times N)$.
Note that under the decomposition $\Omega = \Omega^{(0)} + \cdots + \Omega^{(n)}$, the simplicial $l$-form $\Omega^{(l)}$ is compactly supported for $l < n$.

McDuff proved in \cite{MR0678355} that there exists a simplicial $(n-1)$-form $\Phi = \Phi^{(0)} + \cdots + \Phi^{(n-1)}$ on $\BDbarN_{\bullet} \times N$ such that $d \Phi = \Omega$ and that $\Phi^{(0)}, \cdots , \Phi^{(n-2)}$ are compactly supported.
In particular, the wedge product $\Phi \wedge \Omega$ is compactly supported.
The simplicial $(2n-1)$-form $\Phi \wedge \Omega$ is closed since $\omega \wedge \omega = 0$.
Hence, the integration along the fiber $N$ provides a simplicial $(n-1)$-form $\int_N(\Phi \wedge \Omega)$ on $\BDbarN_{\bullet}$, which is a closed form.
We set
\begin{align*}
  s(N) = \left[ \int_N(\Phi \wedge \Omega) \right] \in \HHH^{n-1}(\BDbarN).
\end{align*}

If $n$ is odd, then $s(N) = 0$ since $\Phi \wedge \Omega = d(\frac{1}{2}\Phi \wedge \Phi)$.
McDuff proved the following.
\begin{theorem}[\cite{MR0678355}]
  The class $s(N)$ is non-zero if $n$ is even.
\end{theorem}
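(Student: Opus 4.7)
The plan is to establish non-vanishing by pairing $s(N)$ with an explicit $(n-1)$-cycle in $\BDbarN$ coming from commuting Hamiltonian flows.

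First, I would use naturality of the construction under open inclusions $U\hookrightarrow N$ equipped with the restricted volume form: the extension-by-identity map $\Diff^c_{\omega,0}(U)\hookrightarrow\Diff^c_{\omega,0}(N)$ is compatible with $\Omega$, and the lower-degree components of $\Phi$ extend by zero thanks to the compact-support hypothesis, so that $s(N)$ pulls back to $s(U)$. Hence it suffices to prove non-vanishing on a standard model such as $N=\RR^n$ with the standard volume form $\omega_0$.

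Next, for $n=2k$ even, write $\omega_0=\omega_{\mathrm{sp}}^{k}/k!$ with $\omega_{\mathrm{sp}}=\sum dx_{2i-1}\wedge dx_{2i}$, so that compactly supported symplectomorphisms preserve $\omega_0$. I would choose non-negative bump Hamiltonians $H_1,\ldots,H_{n-1}$ depending on pairwise disjoint pairs of symplectic coordinates so that their Hamiltonian flows $\phi_i^{t_i}$ pairwise commute; the map $(t_1,\ldots,t_{n-1})\mapsto\phi_1^{t_1}\circ\cdots\circ\phi_{n-1}^{t_{n-1}}$ triangulates to a chain in $\BDbarN_{n-1}$ which, after a suitable closing-up via the $G^{\delta}$-action, represents a cycle $\tau$. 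The expectation is that, because the $H_i$ have disjoint supports, the restrictions of $\Omega$ and $\Phi$ to $\tau\times N$ decouple into terms each localized to a single symplectic slab, and that a Fubini-type argument reduces the pairing $\int_{\tau}\int_{N}\Phi\wedge\Omega$ to a non-zero multiple of the product $\prod_{i}\int H_i\,\omega_0$ of Calabi-type integrals, which is strictly positive for the chosen $H_i$.

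The main obstacle is making $\Phi$ explicit enough to carry out the computation. The naive guess $\Phi_{\sigma}=f_{\sigma}^{*}\lambda$ with $d\lambda=\omega$ fails: since primitives of $\omega$ are not $\Diff^c_{\omega,0}$-invariant, $f_{\sigma}^{*}\lambda$ does not descend to a simplicial form on $\BDbarN_\bullet\times N$ (and in any case it would give $\Phi\wedge\Omega=f_{\sigma}^{*}(\lambda\wedge\omega)=0$ pointwise, forcing the class to vanish trivially). McDuff's $\Phi$ must instead be built by induction on the component degree, using the acyclicity hypothesis $\HHH^{i}_{c}(N)=0$ for $i<n$ at each stage to correct for the failure of $G^{\delta}$-invariance. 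The non-vanishing computation then requires tracing through this inductive construction on the specific cycle $\tau$ and verifying that the leading Calabi-product term is not cancelled by the lower-order inductive corrections.
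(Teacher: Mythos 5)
You should first be aware that the paper does not prove this statement at all: it is imported from McDuff \cite{MR0678355} and used as a black box (it is what makes Corollary \ref{cor:cM_nonzero} non-trivial), so there is no internal argument to compare yours against. Judged on its own terms, your proposal is a programme rather than a proof: you correctly diagnose that $\Phi$ cannot be $f_\sigma^*\lambda$ and must be produced inductively from $\HHH^i_c(N)=0$ for $i<n$ (this matches the zig-zag the paper uses in Section \ref{sec:cocycle}), and your reduction to a standard model is plausible modulo checking that a primitive $\Phi$ for $N$ restricts to an admissible one for $U$ (its correction terms are compactly supported in $N$, not in $U$). But the decisive step --- actually evaluating $\int_N\Phi\wedge\Omega$ on your cycle --- is deferred, and that is where all the content of McDuff's theorem lives.

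Moreover, the specific cycle you propose cannot detect $s(N)$ once $n\ge 4$. Combinatorially, $\RR^{2k}$ has only $k$ symplectic coordinate pairs, so you cannot choose $n-1=2k-1>k$ Hamiltonians on pairwise disjoint pairs of coordinates; presumably you mean Hamiltonians with pairwise disjoint supports $V_1,\dots,V_{n-1}\subset N$. But then at each point $x\in N$ at most one flow moves $x$, so $\Omega=f_\sigma^*\omega$ restricted to your family has components only in bidegrees $(0,n)$ and $(1,n-1)$, i.e.\ $\Omega^{(l)}=0$ for $l\le n-2$. The part of $\Phi\wedge\Omega$ that survives $\int_N$ and pairs with an $(n-1)$-simplex is $\sum_a\Phi^{(a)}\wedge\Omega^{(n-a)}$, which therefore reduces to $\Phi^{(0)}\wedge\Omega^{(n)}+\Phi^{(1)}\wedge\Omega^{(n-1)}$: the entire pairing is carried by the deepest inductive corrections $\Phi^{(0)},\Phi^{(1)}$, and the ``leading Calabi-product term'' $\prod_i\int H_i\,\omega$, which would require $(n-1)$ independent simplex directions to enter through $\Omega$ and $\Phi^{(n-2)}$, is simply absent for $n\ge 3$. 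Worse, your cycle factors through $\overline{BG_1}\times\cdots\times\overline{BG_{n-1}}$ with $G_i=\Diff_{\omega,0}^c(V_i)$ supported in a ball, and the only K\"unneth component of $s(N)$ that can pair with a product of $1$-cycles lies in $\bigotimes_i\HHH^1(\overline{BG_i};\RR)$; this vanishes for $n\ge 3$, since the only candidate degree-one class is of Calabi/flux type valued in $\HHH^{n-1}_c(V_i)=0$ and $\Diff_{\omega,0}^c(V_i)$ is perfect. So $\langle s(N),\tau\rangle=0$ for $n\ge 4$, and your construction recovers only the classical $n=2$ (Calabi) case. A correct detecting cycle must be genuinely $(n-1)$-dimensional at single points of $N$, not a product of disjointly supported circles; this is the essential difficulty that McDuff's argument in \cite{MR0678355} overcomes and that your proposal does not.
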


\section{A closed form in $\mathscr{A}^{n-1}(\| \BDdelM_{\bullet} \|)$}\label{sec:cocycle}
Let $M$ be a compact connected orientable $n$-dimensional manifold with boundary $\partial M$ and with a volume form $\omega$.
In this section and the next section, we do \emph{not} assume that the boundary is diffeomorphic to the sphere $S^{n-1}$.
We assume that $\HHH^l(M, \partial M) = 0$ for all $l < n$.
Let $N$ be the interior of $M$.
Since $\HHH_c^*(N) \cong \HHH^*(M, \partial M)$, the interior $N$ satisfies the assumption in Section \ref{subsec:McDuff_class}.
In this section, we construct a closed form in $\mathscr{A}^{n-1}(\| \BDdelM_{\bullet} \|)$ which correspond to the class $s(N)$ under the maps
\begin{align*}
  \HHH^{n-1}(\| \BDdelM_{\bullet} \|) \to \HHH^{n-1}(\| \BDdelN_{\bullet} \|) \to \HHH^{n-1}(\| \BDbarN_{\bullet} \|).
\end{align*}

Let $\Omega^p(M, \partial M)$ be the set of $p$-forms on $M$ whose pullback to $\partial M$ is $0$.
This gives a subcomplex of the de Rham complex $(\Omega^*(M), d)$, the cohomology of which is isomorphic $\HHH^*(M, \partial M)$.

By definition, every element of $\mathscr{A}^{k,l}(\| \MDdel_{\bullet} \|)$ is identified with a map
\begin{align*}
  c \colon (\DM)^k \to \Omega^l(M).
\end{align*}
Under this identification, the coboundary map
\begin{align*}
  \delta \colon \mathscr{A}^{k,l}(\| \MDdel_{\bullet} \|) \to \mathscr{A}^{k+1,l}(\| \MDdel_{\bullet} \|)
\end{align*}
is given as
\begin{align*}
  \delta &c (g_1, \cdots, g_{k+1}) \\
  &= c(g_2, \cdots, g_{k+1}) + \sum_{i=1}^{k} (-1)^{i} c(g_1, \cdots, g_ig_{i+1}, \cdots, g_{k+1}) + (-1)^{k+1} c(g_1, \cdots, g_k)^{g_{k+1}},
\end{align*}
where $c(g_1, \cdots, g_k)^{g_{k+1}}$ denotes the pullback of $c(g_1, \cdots, g_k) \in \Omega^l(M)$ by the diffeomorphism $g_{k+1}$.

Since each $\BDdelM_k$ is a $0$-dimensional manifold, we have
\begin{align*}
  \mathscr{A}^{k,0}(\BDdelM_{\bullet}) = \{ c \colon (\DM)^{k} \to \RR \}
\end{align*}
and $\mathscr{A}^{k,l}(\BDdelM_{\bullet}) = 0$ for $l > 0$.

Let $\omega^{(n)}$ be an element of $\mathscr{A}^{0,n}(\MDdel_{\bullet})$ corresponding to the volume form $\omega$ of $M$ under the identification $\mathscr{A}^{0,n}(\MDdel_{\bullet}) = \Omega^n(M)$.
By the exactness of $\omega$, we take an $(n-1)$-form $\phi$ such that $d\phi = \omega$.
We set $\phi^{(n-1)}$ to be an element of $\mathscr{A}^{0,n-1}(\MDdel_{\bullet})$ corresponding to $\phi$.
By the assumption $\HHH^{l}(M,\partial M) = 0$ for all $l < n$ and the zig-zag argument on the double complex $\mathscr{A}^{*,*}(\MDdel_{\bullet})$, we obtain $\phi^{(l)} \in \mathscr{A}^{n-l, l}(\MDdel_{\bullet})$ such that
$\phi = \phi^{(0)} + \cdots + \phi^{(n-1)} \in \mathscr{A}^{n-1}(\MDdel_{\bullet})$ satisfies $d\phi = \omega^{(n)}$ and
$\phi^{(l)}(g_1, \cdots, g_{n-1-l}) \in \Omega^{l}(M, \partial M)$ for every $0 \leq l \leq n-2$ and for every $(g_1, \cdots, g_{n-1-l}) \in \DM^{n-1-l}$.
This is depicted as follows.
\begin{align*}
\xymatrix{
\omega^{(n)} &&&&& \\
\phi^{(n-1)} \ar[u]_-{d''} \ar[r]^-{\delta} &0&&&& \\
& \phi^{(n-2)} \ar[u]_-{d''} \ar[r]^-{\delta} &\ar@{.}[rrd]&&& \\
&&&&& \\
&&&& \phi^{(0)} \ar[u]_-{d''} \ar[r]^-{\delta} & 0
}
\end{align*}
Note that $\delta \phi^{(0)} = 0$ since $d''\delta \phi^{(0)} = -\delta d''\phi^{(0)} = -\delta \delta \phi^{(1)} = 0$ and the restriction of $\delta \phi^{(0)} (g_1, \cdots, g_n)$ on $\partial M$ is $0$.

For non-negative integers $p,q,r$ and $s$, the \emph{wedge product}
\begin{align*}
  \wedge \colon \mathscr{A}^{p,q}(\MDdel_{\bullet}) \times \mathscr{A}^{r,s}(\MDdel_{\bullet}) \to \mathscr{A}^{p+r, q+s}(\MDdel_{\bullet})
\end{align*}
is defined by
\begin{align*}
  \alpha \wedge \beta (g_1, \cdots, g_{p+r}) = \alpha(g_1, \cdots, g_{p})^{g_{p+1}\cdots g_{p+r}} \wedge \beta(g_{p+1}, \cdots, g_{p+r}),
\end{align*}
for $\alpha \in \mathscr{A}^{p,q}(\MDdel_{\bullet})$, $\beta \in \mathscr{A}^{r,s}(\MDdel_{\bullet})$ and $(g_1, \cdots, g_{p+r}) \in \DM^{p+r}$.
It is straightforward to check that the equalities
\begin{align}\label{product_differential}
  \delta (\alpha \wedge \beta) = \delta \alpha \wedge \beta + (-1)^p \alpha \wedge \delta \beta \nonumber \\
  d'' (\alpha \wedge \beta) = d'' \alpha \wedge \beta + (-1)^q \alpha \wedge d'' \beta
\end{align}
hold for $\alpha \in \mathscr{A}^{p,q}(\MDdel_{\bullet})$ and $\beta \in \mathscr{A}^{r,s}(\MDdel_{\bullet})$.
By extending the wedge product linearly, we obtain the wedge product on the total complex.

The wedge product $\phi \wedge \omega^{(n)} \in \mathscr{A}^{2n-1}(\MDdel_{\bullet})$ is a closed form since $d(\phi \wedge \omega^{(n)}) = \omega^{(n)} \wedge \omega^{(n)} =0$.
Since $\Omega^l(M) = 0$ if $l > n$, we have $\phi \wedge \omega^{(n)} = \phi^{(0)} \wedge \omega^{(n)} \in \mathscr{A}^{n-1, n}(\MDdel_{\bullet})$.
Then the integration along the fiber $M$ provides a closed $(n-1)$-form $\int_M \phi^{(0)} \wedge \omega^{(n)}$ on $\BDdelM_{\bullet}$.
We set
\begin{align*}
  c(M) = \left[ \int_M \phi^{(0)} \wedge \omega^{(n)} \right] \in \HHH^{n-1}(\BDdelM).
\end{align*}

\begin{lemma}
  The class $c(M)$ does not depend on the choice of $\phi$.
\end{lemma}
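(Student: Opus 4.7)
The plan is to let $\phi$ and $\phi'$ be two choices of primitive satisfying the zig-zag conditions, form the difference $\psi = \phi - \phi' \in \mathscr{A}^{n-1}(\MDdel_{\bullet})$, and exhibit $\int_M \psi^{(0)} \wedge \omega^{(n)}$ as $\delta$-exact in $\mathscr{A}^{n-1,0}(\BDdelM_{\bullet})$. Since both $\phi$ and $\phi'$ satisfy $d\phi = d\phi' = \omega^{(n)}$, the difference is closed ($d\psi = 0$), and each component $\psi^{(l)}(g_1, \ldots, g_{n-1-l})$ lies in $\Omega^l(M, \partial M)$ for $0 \leq l \leq n-2$ because this holds for both $\phi^{(l)}$ and $\phi'^{(l)}$.

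The key step is to introduce the auxiliary form $\xi = \psi \wedge \phi \in \mathscr{A}^{2n-2}(\MDdel_{\bullet})$. Applying the Leibniz rule \eqref{product_differential} together with $d\psi = 0$ and $d\phi = \omega^{(n)}$ yields $d\xi = (-1)^{n-1} \psi \wedge \omega^{(n)}$, and since $\Omega^l(M) = 0$ for $l > n$ only the $\psi^{(0)}$ piece survives, so $d\xi = (-1)^{n-1} \psi^{(0)} \wedge \omega^{(n)}$, an element of $\mathscr{A}^{n-1,n}(\MDdel_{\bullet})$. Equating the $(n-1, n)$-components of $d\xi = \delta\xi + d''\xi$ gives
\[
(-1)^{n-1} \psi^{(0)} \wedge \omega^{(n)} = \delta \xi^{(n-2, n)} + d''\xi^{(n-1, n-1)}.
\]
Because every $g \in \DM$ preserves $\omega$, fiber integration $\int_M$ commutes with $\delta$, so the first term contributes $\delta \int_M \xi^{(n-2, n)}$, which is manifestly $\delta$-exact on $\BDdelM_{\bullet}$.

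To finish I would show that $\int_M d''\xi^{(n-1,n-1)} = 0$. By Stokes' theorem this integral equals, up to sign, the integral of $\xi^{(n-1,n-1)}$ over $\partial M$, so it suffices to check that each summand of $\xi^{(n-1, n-1)} = \sum_{a+b=n-1} \psi^{(a)} \wedge \phi^{(b)}$ pulls back to zero on $\partial M$: if $a \leq n-2$ then $\psi^{(a)}$ vanishes on $\partial M$, and if $a = n-1$ then $b = 0 \leq n-2$ and $\phi^{(0)}$ vanishes on $\partial M$. Combining these would yield $\int_M \psi^{(0)} \wedge \omega^{(n)} = (-1)^{n-1} \delta \int_M \xi^{(n-2, n)}$, showing that $c(M)$ is independent of $\phi$.

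The main obstacle is the bookkeeping of the bidegrees in the expansion of $\psi \wedge \phi$ and the boundary-vanishing check; the latter is the conceptual heart and relies crucially on the relative constraint $\phi^{(l)} \in \Omega^l(M, \partial M)$ for $l \leq n-2$, which is precisely what forces at least one factor in each summand of $\xi^{(n-1,n-1)}$ to lie in the relative subcomplex. The argument implicitly uses $n \geq 2$, which is consistent with the paper's standing setup.
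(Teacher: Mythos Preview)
Your route is different from the paper's. The paper does not form $\psi\wedge\phi$; instead it runs the same zig-zag that produced $\phi$ a second time on the closed element $\phi-\phi'$, obtaining $\zeta\in\mathscr{A}^{n-2}(\MDdel_\bullet)$ with $\phi-\phi'=d\zeta$ and each $\zeta^{(l)}$ valued in $\Omega^l(M,\partial M)$. In bidegree $(n-1,0)$ this reads $\phi^{(0)}-\phi'^{(0)}=\delta\zeta^{(0)}$, whence $\int_M(\phi^{(0)}-\phi'^{(0)})\wedge\omega^{(n)}=\delta\!\int_M\zeta^{(0)}\wedge\omega^{(n)}$ in one line.

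Your argument has a genuine gap at the Leibniz step. With the sign conventions in \eqref{product_differential}, the total differential $d=\delta+d''$ does \emph{not} satisfy $d(\alpha\wedge\beta)=d\alpha\wedge\beta+(-1)^{|\alpha|}\alpha\wedge d\beta$: for $\alpha\in\mathscr{A}^{p,q}$ the sign on $\alpha\wedge\delta\beta$ is $(-1)^p$ while that on $\alpha\wedge d''\beta$ is $(-1)^q$. Expanding $d(\psi\wedge\phi)$ component by component one finds
\[
d\xi \;=\; (-1)^{n-1}\psi^{(0)}\wedge\omega^{(n)}\;+\;\sum_{a}\bigl((-1)^a-(-1)^{\,n-1-a}\bigr)\,\psi^{(a)}\wedge d''\phi,
\]
and the bracket vanishes only when $n$ is odd. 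For $n$ even the correction contributes terms $2(-1)^a\psi^{(a)}\wedge d''\phi^{(n-1-a)}$ that sit precisely in bidegree $(n-1,n)$; already the $a=0$ summand flips the coefficient of $\psi^{(0)}\wedge\omega^{(n)}$ from $-1$ to $+1$. So your identity $(d\xi)^{(n-1,n)}=(-1)^{n-1}\psi^{(0)}\wedge\omega^{(n)}$ fails in the case that matters (recall $c(M)=0$ for $n$ odd by Proposition~\ref{prop:cM_zero}). The paper's own proof of Proposition~\ref{prop:cM_zero} flags exactly this phenomenon: the displayed formula $d(\phi\wedge\phi)=d\phi\wedge\phi+\phi\wedge d\phi-2(\sum\phi^{(2i-1)})\wedge d\phi$ exhibits the correction the naive rule misses. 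Your Stokes argument for $\int_M d''\xi^{(n-1,n-1)}=0$ is correct, but without controlling the extra terms the proof does not close; dealing with them essentially reintroduces the zig-zag you were trying to avoid.
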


\begin{proof}
  Let $\psi = \psi^{(0)} + \cdots + \psi^{(n-1)} \in \mathscr{A}^{n-1}(\MDdel_{\bullet})$ be another choice of $\phi$.
  Then there exists $\zeta = \zeta^{(0)} + \cdots + \zeta^{(n-2)} \in \mathscr{A}^{n-2}(\MDdel_{\bullet})$ such that $\phi - \psi = d\zeta$ and $\zeta^{(l)}(g_1, \cdots, g_{n-l-2}) \in \Omega^{l}(M, \partial M)$.
  Hence we have
  \begin{align*}
    \phi^{(0)} \wedge \omega^{(n)} - \psi^{(0)} \wedge \omega^{(n)} = \delta \zeta^{(0)} \wedge \omega^{(n)} = \delta (\zeta^{(0)} \wedge \omega^{(n)}),
  \end{align*}
  which implies
  \begin{align*}
    \int_M \phi^{(0)} \wedge \omega^{(n)} - \int_M \psi^{(0)} \wedge \omega^{(n)} = \delta \left(\int_M \zeta^{(0)} \wedge \omega^{(n)} \right).
  \end{align*}
\end{proof}

\begin{proposition}\label{prop:cM_zero}
  If $n$ is odd, then the class $c(M)$ is zero.
\end{proposition}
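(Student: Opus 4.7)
The plan is to adapt McDuff's observation that $\Phi \wedge \Omega = d(\tfrac{1}{2}\Phi \wedge \Phi)$ directly to the $(n-1)$-cochain $\phi \in \mathscr{A}^{n-1}(\MDdel_{\bullet})$ we have already constructed.

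First I would compute $d(\phi \wedge \phi)$. Since $n$ is odd and $|\phi| = n-1$ is even, the graded Leibniz rule \eqref{product_differential} together with $d\phi = \omega^{(n)}$ gives
\begin{align*}
d(\phi \wedge \phi) = \omega^{(n)} \wedge \phi + \phi \wedge \omega^{(n)}.
\end{align*}
For $l \geq 1$, both $\omega^{(n)} \wedge \phi^{(l)}$ and $\phi^{(l)} \wedge \omega^{(n)}$ vanish on $M$-form-degree grounds. Unpacking the wedge product formula and using that elements of $\DM$ preserve $\omega$, the two surviving terms $\omega^{(n)} \wedge \phi^{(0)}$ and $\phi^{(0)} \wedge \omega^{(n)}$ both evaluate to $\phi^{(0)}(g_1,\dots,g_{n-1}) \cdot \omega$, yielding the key identity
\begin{align*}
d(\phi \wedge \phi) = 2\,\phi^{(0)} \wedge \omega^{(n)} \in \mathscr{A}^{n-1,n}(\MDdel_{\bullet}).
\end{align*}

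Next I would pass this identity down to $\BDdelM_{\bullet}$ by fiber integration. Using the decomposition notation of Definition \ref{def:notation_decomp} for $\phi \wedge \phi$ and extracting the bidegree $(n-1, n)$ component on both sides, the identity becomes
\begin{align*}
\delta (\phi \wedge \phi)^{(n)} + d''(\phi \wedge \phi)^{(n-1)} = 2\,\phi^{(0)} \wedge \omega^{(n)}.
\end{align*}
The operator $\int_M$ commutes with $\delta$ because the face maps $\epsilon_i$ on $\MDdel_{\bullet}$ act on the $M$-factor by volume-preserving diffeomorphisms, and Stokes' theorem converts $\int_M d''(\phi \wedge \phi)^{(n-1)}$ into $\pm\int_{\partial M}(\phi \wedge \phi)^{(n-1)}$.

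The main step needing care — and the only place where the boundary-relative part of the zig-zag construction really enters — is showing this boundary integral vanishes. Writing $(\phi \wedge \phi)^{(n-1)} = \sum_{l+l' = n-1}\phi^{(l)} \wedge \phi^{(l')}$, each admissible pair satisfies $\min(l, l') \leq n - 2$ (otherwise $l + l' \geq 2(n-1) > n-1$ for $n \geq 2$), so the corresponding factor lies in $\Omega^{\min(l,l')}(M, \partial M)$ by construction and pulls back to zero on $\partial M$; hence each summand restricts to zero. Combining everything, $2 \int_M \phi^{(0)} \wedge \omega^{(n)} = \delta \int_M (\phi \wedge \phi)^{(n)}$, showing that $c(M)$ is a $\delta$-coboundary in $\mathscr{A}^{n-1}(\BDdelM_{\bullet})$ and therefore zero in $\HHH^{n-1}(\BDdelM)$.
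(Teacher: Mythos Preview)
Your argument is correct and follows the same strategy as the paper's proof: show $d(\phi\wedge\phi)=2\,\phi^{(0)}\wedge\omega^{(n)}$, then integrate over $M$ and use Stokes together with $\phi^{(l)}\in\Omega^{l}(M,\partial M)$ for $l\leq n-2$ to kill the boundary term.

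One point deserves care. The Leibniz rules \eqref{product_differential} carry signs $(-1)^{p}$ (for $\delta$) and $(-1)^{q}$ (for $d''$) depending on the \emph{bidegree} of the left factor, so the total differential $d=\delta+d''$ does not satisfy a Leibniz rule with sign $(-1)^{|\phi|}=(-1)^{n-1}$ on a non-bihomogeneous element like $\phi=\sum_{l}\phi^{(l)}$. The paper makes this explicit:
\[
  d(\phi\wedge\phi)=d\phi\wedge\phi+\phi\wedge d\phi-2\sum_{i=1}^{(n-1)/2}\phi^{(2i-1)}\wedge d\phi,
\]
and then observes that the correction vanishes because each $\phi^{(l)}\wedge\omega^{(n)}$ with $l\geq 1$ has $M$-form degree $>n$ --- exactly the observation you make in your next sentence. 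So your displayed equality $d(\phi\wedge\phi)=\omega^{(n)}\wedge\phi+\phi\wedge\omega^{(n)}$ is true as an identity in $\mathscr{A}^{*}(\MDdel_{\bullet})$, but the reason is the form-degree vanishing rather than a total-degree Leibniz rule. The rest of your proof --- extracting the bidegree-$(n-1,n)$ component, commuting $\int_{M}$ with $\delta$, and eliminating the Stokes boundary term since $\min(l,l')\leq n-2$ forces one factor into $\Omega^{*}(M,\partial M)$ --- matches the paper exactly.
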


\begin{proof}
  By \eqref{product_differential} and the assumption, we have
  \begin{align*}
    d(\phi \wedge \phi) &= d\phi \wedge \phi + \phi \wedge d\phi - 2\left(\sum_{i=1}^{(n-1)/2} \phi^{(2i-1)}\right) \wedge d\phi \\
    &= \omega^{(n)} \wedge \phi + \phi \wedge \omega^{(n)} - 2\left(\sum_{i=1}^{(n-1)/2} \phi^{(2i-1)}\right) \wedge \omega^{(n)} \\
    &= 2\phi^{(0)} \wedge \omega^{(n)}.
  \end{align*}
  Hence we obtain
  \begin{align*}
    \int_M \phi^{(0)} \wedge \omega^{(n)} = \int_M d(\phi \wedge \phi) = \int_M (\delta(\phi \wedge \phi) + d''(\phi \wedge \phi)) = \delta \left(\int_M \phi \wedge \phi \right),
  \end{align*}
  where the last equality comes from the stokes formula and $\phi^{(l)} \in \Omega^l(M, \partial M)$ for every $l < n-1$.
\end{proof}

\section{Proof of Theorem \ref{thm:cM=sN}}\label{sec:proof_cM=sN}

Let $M$ and $\omega$ be as in Section \ref{sec:cocycle}.
Let $\phi$ be an element of $\mathscr{A}^{n-1}(\NDdel_{\bullet})$ appeared in the definition of $c(M)$.
Let $N$ be the interior of $M$.
By abuse of notation, we use the same symbols $\omega, \omega^{(n)}$ and $\phi^{(l)}$ to denote the pullbacks to $N$.
Let $\iota \colon \DN \to \DM$ be the inclusion given by extending diffeomorphisms by the identity on $\partial M$.
This induces a map
\begin{align*}
  \iota^* \colon \HHH^{n-1}(\BDdelM) \to \HHH^{n-1}(\BDdelN).
\end{align*}

Let $\{ e_0, \cdots, e_p \} \subset \Delta^p$ be the canonical basis of $\RR^{p+1}$.
We define a map $f \colon \| \BDbarN_{\bullet} \| \to \| \BDdelN_{\bullet} \|$ by
\begin{align*}
  f(t, [\sigma]) = (t, \sigma(e_0)\sigma(e_1)^{-1}, \cdots, \sigma(e_{p-1})\sigma(e_p)^{-1})
\end{align*}
on each $\Delta^p \times \BDbarN_p$, and define $F \colon \| \BDbarN_{\bullet} \times N \| \to \| \NDdel_{\bullet} \|$ by
\begin{align*}
  F(t, [\sigma], x) = (t, \sigma(e_0)\sigma(e_1)^{-1}, \cdots, \sigma(e_{p-1})\sigma(e_p)^{-1}; \sigma(e_p)\sigma(t)^{-1}(x))
\end{align*}
on each $\Delta^p \times \BDbarN_p \times N$.
These maps form a commutative diagram
\begin{align*}
\xymatrix{
\| \BDbarN_{\bullet} \times N \| \ar[r]^-{F} \ar[d] & \| \NDdel_{\bullet} \| \ar[d] \\
\| \BDbarN_{\bullet} \| \ar[r]^-{f} & \| \BDdelN_{\bullet} \|.
}
\end{align*}


Since $\omega^{(n)}$ is contained in $\mathscr{A}^{0,n}(\NDdel_{\bullet})$, the simplicial $n$-form $\mathscr{E}(\omega^{(n)}) = \{ \mathscr{E}(\omega^{(n)})_p \}_{p \geq 0}$ is given by
\begin{align*}
  \mathscr{E}(\omega^{(n)})_p = \pi_p^*\omega^{(n)} \in \Omega^n(\Delta^p \times \NDdel_p),
\end{align*}
where $\pi_p \colon \Delta^p \times \NDdel_p \to \NDdel_p$ is the projection.
Hence, by the definition of the simplicial $n$-form $\Omega \in A^n(\BDbarN_{\bullet} \times N)$ given in Section \ref{subsec:McDuff_class}, we have $F^*\mathscr{E}(\omega^{(n)}) = \Omega$.

Let us consider the simplicial $(n-1)$-form $F^*\mathscr{E}(\phi) \in A^{n-1}(\BDbarN_{\bullet} \times N)$. 
Since $d\phi = \omega^{(n)}$, we have $dF^*\mathscr{E}(\phi) = \Omega$.
Hence $F^*\mathscr{E}(\phi)$ plays the role of $\Phi$ appeared in the definition of McDuff's secondary class $s(N)$.
We set $\Phi = F^*\mathscr{E}(\phi)$.
By the definition of $\mathscr{E}$, we have
$\mathscr{E}(\phi \wedge \omega^{(n)}) = \mathscr{E}(\phi^{(0)}\wedge \omega^{(n)}) = \mathscr{E}(\phi^{(0)}) \wedge \mathscr{E}(\omega^{(n)}) = \mathscr{E}(\phi) \wedge \mathscr{E}(\omega^{(n)})$, and hence
\begin{align*}
  F^*\mathscr{E}(\phi \wedge \omega^{(n)}) = F^* \mathscr{E} (\phi) \wedge F^* \mathscr{E}(\omega^{(n)}) = \Phi \wedge \Omega.
\end{align*}

\begin{lemma}\label{lem:commutativity_int_N}
  The integration $\int_N$ along $N$ induces a commutative diagram
  \begin{align*}
  \xymatrix{
  \mathscr{A}^{n-1,n}(\NDdel_{\bullet}) \ar[r]^-{\mathscr{E}} \ar[d]^-{\int_N} & A^{n-1,n}(\NDdel_{\bullet}) \ar[r]^-{F^*} \ar[d]^-{\int_N} & A^{n-1,n}(\BDbarN_{\bullet} \times N) \ar[d]^-{\int_N} \\
  \mathscr{A}^{n-1,0}(\BDdelN_{\bullet}) \ar[r]^-{\mathscr{E}} & A^{n-1,0}(\BDdelN_{\bullet}) \ar[r]^-{f^*} & A^{n-1,0}(\BDbarN_{\bullet}).
  }
  \end{align*}
\end{lemma}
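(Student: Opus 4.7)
The plan is to verify the two squares of the diagram separately, reducing both commutativities to the change-of-variables formula for integration of top-degree forms on $N$ under orientation-preserving diffeomorphisms. Throughout, every element $g \in \DN$ is an orientation-preserving diffeomorphism of $N$ (as it lies in the identity component), so $\int_N g^* \gamma = \int_N \gamma$ for any compactly supported top-form $\gamma$ on $N$.

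For the left square, I would substitute into the explicit formula
\[
  \mathscr{E}(\alpha)_p = k! \sum_{|I| = n-1} \omega_I \wedge \pi_p^*\mu_I^* \alpha,
\]
where $\omega_I$ depends only on the $\Delta^p$-variables and $\mu_I \colon \NDdel_p \to \NDdel_k$ is a composition of face maps $\epsilon_i$. Each $\epsilon_i \colon \NDdel_p \to \NDdel_{p-1}$ acts on the $N$-coordinate either trivially (for $i < p$) or by $x \mapsto g_p(x)$ (for $i = p$); hence $\mu_I$ acts on the $N$-coordinate by some $\tilde g_I(g) \in \DN$ depending only on the $(\DN)^p$-part. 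Change of variables for $\tilde g_I(g)$ then gives
\[
  \int_N \mu_I^* \alpha(g; \cdot) = \int_N \alpha(\bar\mu_I(g); \cdot),
\]
where $\bar\mu_I$ denotes the corresponding face map on $\BDdelN_\bullet$ (which has no $N$-factor). Since the $\omega_I$'s are independent of the $N$-variable, summing over $I$ and multiplying out yields $\int_N \mathscr{E}(\alpha) = \mathscr{E}(\int_N \alpha)$.

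For the right square, the map $F$ fixes the $\Delta^p$-coordinate, sends $[\sigma]$ to $f([\sigma])$ on the discrete part, and sends $x \in N$ to $h_{t,\sigma}(x) := \sigma(e_p)\sigma(t)^{-1}(x)$. The subtle point is that the diffeomorphism $h_{t,\sigma}$ depends smoothly on $t$, so in local coordinates the pullback $F^* dx^j$ acquires $dt$-components. However, since $\alpha \in A^{n-1,n}(\NDdel_\bullet)$ has $N$-degree equal to $n$ (the top degree on $N$), the fiber integration $\int_N$ extracts only the $N$-degree-$n$ part of $F^* \alpha$; the $dt$-components of $F^* dx^j$ then drop out, and what survives is the fiberwise pullback of $\alpha|_{(t, f([\sigma]))}$ along the orientation-preserving diffeomorphism $x \mapsto h_{t,\sigma}(x)$ of $N$. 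Change of variables then gives
\[
  \int_N F^*\alpha \bigg|_{(t,[\sigma])} = \left(\int_N \alpha\right)\bigg|_{(t, f([\sigma]))} = f^*\int_N \alpha \bigg|_{(t,[\sigma])}.
\]

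The main point requiring care is the right square: one must argue that the $t$-dependence of $h_{t,\sigma}$ does not produce extra terms under $F^*$ that survive fiber integration. This is precisely where the top-$N$-degree hypothesis on $\alpha$ is essential, as it forces the $dt$-contributions from $F^* dx^j$ to vanish after extracting the fiberwise top-form component.
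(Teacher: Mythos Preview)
Your argument is correct and follows essentially the same two-square decomposition as the paper. For the left square you make explicit the change-of-variables step that the paper leaves implicit when matching the two expressions for $\mathscr{E}$; for the right square you unpack by hand what the paper dispatches in one line by observing that $F$ and $f$ constitute a bundle map over $\Delta^p \times \BDbarN_p$ and invoking the standard naturality of fiber integration under bundle maps (so your careful bidegree analysis of the $dt$-contributions, while correct, is exactly the content of that general fact). One small remark: you phrase the change-of-variables identity for ``compactly supported'' top-forms $\gamma$, but the forms actually being integrated (such as $\phi^{(0)}\wedge\omega$) are not compactly supported on $N$; what makes the argument go through is rather that the diffeomorphisms in $\DN$ are compactly supported and volume-preserving (or equivalently that everything extends smoothly to the compact $M$), so $\int_N g^*\gamma = \int_N \gamma$ still holds.
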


\begin{proof}
  Let $\alpha$ be an element of $\mathscr{A}^{n-1,n}(\NDdel_{\bullet})$.
  Then $\int_N \alpha \in \mathscr{A}^{n-1,0}(\BDdelN_{\bullet})$ is given by
  \begin{align*}
    \left( \int_N \alpha \right) (g_1, \cdots, g_{n-1}) = \int_N \alpha (g_1, \cdots, g_{n-1}),
  \end{align*}
  and hence $\mathscr{E}\left( \int_N \alpha \right) = \left\{ \mathscr{E}\left( \int_N \alpha \right)_p \right\}_{p \geq 0} \in A^{n-1,0}(\BDdelN_{\bullet})$ is of the form
  \begin{align*}
    \mathscr{E}\left( \int_N \alpha \right)_p =
    \begin{cases}
      0 & \text{ if } p < n-1 \\
      \displaystyle (n-1)! \sum_{|I| = n-1}\left(\sum_{j=0}^{n-1}(-1)^j t_{i_j}dt_{i_0} \wedge \cdots \wedge \widehat{dt}_{i_j} \wedge \cdots \wedge dt_{i_{n-1}}\right)\wedge \pi_p^* \mu_I^* \left( \int_N\alpha \right) & \text{ if } p \geq n-1,
    \end{cases}
  \end{align*}
  where $\pi_p \colon \Delta^p \times \BDdelN_p \to \BDdelN_p$ is the projection.

  By definition, the element $\mathscr{E}(\alpha) = \{ \mathscr{E}(\alpha)_p \}_{p \geq 0}$ of $A^{n-1,n}(\NDdel_{\bullet})$ is given by
  \begin{align*}
    \mathscr{E}\left( \alpha \right)_p =
    \begin{cases}
      0 & \text{ if } p < n-1 \\
      \displaystyle (n-1)! \sum_{|I| = n-1}\left(\sum_{j=0}^{n-1}(-1)^j t_{i_j}dt_{i_0} \wedge \cdots \wedge \widehat{dt}_{i_j} \wedge \cdots \wedge dt_{i_{n-1}}\right)\wedge (\pi_p')^* \mu_I^* \alpha & \text{ if } p \geq n-1,
    \end{cases}
  \end{align*}
  where $\pi_p' \colon \Delta^p \times \NDdel_p \to \NDdel_p$ is the projection.
  Hence the integration $\int_N \mathscr{E}(\alpha) = \left\{ \int_N (\mathscr{E}(\alpha)_p) \right\}_{p \geq 0} \in A^{n-1,0}(\BDdelN_{\bullet})$ along the fiber $N$ is of the form
  \begin{align*}
    \int_N (\mathscr{E}\left( \alpha \right)_p) =
    \begin{cases}
      0 & \text{ if } p < n-1 \\
      \displaystyle (n-1)! \sum_{|I| = n-1}\left(\sum_{j=0}^{n-1}(-1)^j t_{i_j}dt_{i_0} \wedge \cdots \wedge \widehat{dt}_{i_j} \wedge \cdots \wedge dt_{i_{n-1}}\right)\wedge \pi_p^* \mu_I^* \left(\int_N \alpha \right) & \text{ if } p \geq n-1.
    \end{cases}
  \end{align*}
  This implies the commutativity of the left-hand side of the diagram.

  Let $\lambda = \{ \lambda_p \}_{p \geq 0}$ be an element of $A^{n-1,n}(\BDdelN_{\bullet})$.
  By definition, the maps $F$ and $f$ provide a bundle map
  \begin{align*}
  \xymatrix{
    \Delta^p \times \BDbarN_p \times N \ar[r]^-{F} \ar[d] & \Delta^p \times \BDdelN_p \times N \ar[d] \\
    \Delta^p \times \BDbarN_p \ar[r]^-{f} & \Delta^p \times \BDdelN_p
  }
  \end{align*}
  for each $p \geq 0$.
  Hence, by the property of the integration along the fiber, we have
  \begin{align*}
    \left(\int_N F^* \lambda \right)_p = \int_N F^* (\lambda_p) = f^* \int_N \lambda_p = \left(f^* \int_N \lambda \right)_p.
  \end{align*}
  This implies the commutativity of the right-hand side of the diagram.
\end{proof}

\begin{proof}[Proof of Theorem \ref{thm:cM=sN}]
  By Lemma \ref{lem:commutativity_int_N}, we have $f^*\mathscr{E} (\int_N \phi\wedge \omega^{(n)}) = \int_N F^* \mathscr{E}(\phi\wedge \omega^{(n)}) = \int_N \Phi \wedge \Omega$.
  Since the left-hand side represents the class $f^* \iota^* (c(M))$ and the right-hand side represents the class $s(N)$, we have $f^* \iota^* (c(M)) = s(N)$.
\end{proof}

\section{Proof of Theorem \ref{thm:cM_transgression}}\label{sec:proof_transgression}
Let $M$ and $\omega$ be as in Section \ref{sec:proof_cM=sN}.
In this section, we \emph{do} assume that the boundary $\partial M$ is diffeomorphic to the sphere $S^{n-1}$.
By the cohomology long exact sequence of the pair $(M, \partial M)$ and the assumption that $\HHH^l(M, \partial M) = 0$ for every $l < n$, we have $\HHH^l(M) = 0$ for every $0 < l < n$.

Recall that $\DMful$ denotes the identity component of the group of $\omega$-preserving $C^{\infty}$-diffeomorphisms of $M$.
Let $\omb^{(n)}$ be the element of $\mathscr{A}^{0,n}(\MDfuldel_{\bullet})$ which corresponds to $\omega \in \Omega^n(M)$ under the canonical identification $\mathscr{A}^{0,n}(\MDfuldel_{\bullet}) = \Omega^n(M)$.
Let $i \colon \partial M \to M$ be the inclusion.
\begin{lemma}\label{lem:phib_boundary_coincide}
  There exists $\phib = \phib^{(0)} + \cdots + \phib^{(n-1)} \in \mathscr{A}^{n-1}(\MDfuldel_{\bullet})$ satisfying the following:
  \begin{enumerate}[$(1)$]
    \item $d \phib = \omb^{(n)} + \delta \phib^{(0)}$,
    \item $\phib^{(l)}(g_1, \cdots, g_{n-l-1}) \in \Omega^l(M, \partial M)$ for every $l < n-1$ and for every $(g_1, \cdots, g_{n-l-1}) \in \DM^{n-l-1}$, and
    \item for every $l < n-1$, the pullback $i^*\phib^{(l)}(g_1, \cdots, g_{n-l-1}) \in \Omega^l(\partial M)$ coincides with $i^*\phib^{(l)}(h_1, \cdots, h_{n-l-1})$ whenever $g_k, h_k \in \DMful$ satisfy $g_k|_{\partial M} = h_k|_{\partial M}$ for every $1 \leq k \leq n-l-1$.
  \end{enumerate}
\end{lemma}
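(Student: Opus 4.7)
The plan is to construct $\phib^{(l)}$ by descending induction on $l$ from $l=n-1$ down to $l=0$, at each step matching an auxiliary ``boundary'' cochain $\bar\phib^{(l)}\in\mathscr{A}^{n-l-1,l}(B\Dbdr^\delta\times\partial M)$ to $\phib^{(l)}$ via
\[
  i^*\phib^{(l)}(g_1,\dots,g_{n-l-1})=\bar\phib^{(l)}(g_1|_{\partial M},\dots,g_{n-l-1}|_{\partial M}).
\]
This identity directly yields condition~$(3)$. Condition~$(1)$ will follow once the zig-zag relations $d''\phib^{(n-1)}=\omb^{(n)}$ and $d''\phib^{(l)}+\delta\phib^{(l+1)}=0$ (for $l<n-1$) are set up, since summing over $l$ gives $d\phib=\omb^{(n)}+\delta\phib^{(0)}$. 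Condition~$(2)$ will follow by also arranging $\bar\phib^{(l)}(\mathrm{id},\dots,\mathrm{id})=0$ at every level.

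\emph{Zig-zag on $\partial M$.} Fix $\phi\in\Omega^{n-1}(M)$ with $d\phi=\omega$ (which exists since $\HHH^n(M)=0$ for a compact manifold with non-empty boundary), and set $\phib^{(n-1)}=\phi$ and $\bar\phib^{(n-1)}=i^*\phi$. Then $\delta\bar\phib^{(n-1)}(h)=i^*\phi-h^*i^*\phi$ is a top-degree form on $S^{n-1}$ whose integral vanishes because $h$ is orientation-preserving, hence it is exact. One can then inductively solve for $\bar\phib^{(l)}$ for $l=n-2,\dots,0$: at each step $\delta\bar\phib^{(l+1)}$ is $d''$-closed by $d''\delta+\delta d''=0$, and is $d''$-exact using $\HHH^{l+1}(S^{n-1})=0$ for $0<l+1<n-1$ (with the very first step already handled above). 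A short sign computation shows that $\delta\bar\phib^{(l+1)}$ vanishes at the all-identity tuple whenever $\bar\phib^{(l+1)}$ does, so one can further arrange $\bar\phib^{(l)}(\mathrm{id},\dots,\mathrm{id})=0$ throughout the induction.

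\emph{Lifting to $M$.} For each fixed $(g_1,\dots,g_{n-l-1})\in\DMful^{n-l-1}$, use surjectivity of $i^*\colon\Omega^l(M)\to\Omega^l(\partial M)$ to choose $\tilde\phib\in\Omega^l(M)$ with $i^*\tilde\phib=\bar\phib^{(l)}(g_1|_{\partial M},\dots,g_{n-l-1}|_{\partial M})$, and set $\alpha=\pm\delta\phib^{(l+1)}(g_1,\dots,g_{n-l-1})-d\tilde\phib\in\Omega^{l+1}(M)$, with sign dictated by the desired zig-zag relation. Pulling back to $\partial M$, the two terms of $i^*\alpha$ cancel by the inductive matching identity for $\phib^{(l+1)}$ combined with the zig-zag relation already established for $\bar\phib$ on $\partial M$; and $\alpha$ is closed because $d\delta\phib^{(l+1)}=\delta d\phib^{(l+1)}=\pm\delta\delta\phib^{(l+2)}=0$. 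Hence $\alpha\in\Omega^{l+1}(M,\partial M)$ is closed in the relative complex, and the hypothesis $\HHH^{l+1}(M,\partial M)=0$ produces $\eta\in\Omega^l(M,\partial M)$ with $d\eta=\alpha$. Taking $\phib^{(l)}(g_1,\dots,g_{n-l-1})=\tilde\phib+\eta$ completes the inductive step. Condition~$(2)$ then falls out: for $g_k\in\DM$ one has $g_k|_{\partial M}=\mathrm{id}$, so $i^*\phib^{(l)}(g_1,\dots)=\bar\phib^{(l)}(\mathrm{id},\dots,\mathrm{id})=0$, placing $\phib^{(l)}$ in the relative complex. The main obstacle is verifying that $\alpha$ actually lies in $\Omega^{l+1}(M,\partial M)$; this is precisely why the zig-zag on $\partial M$ must be performed first, so that the chosen boundary extension $\tilde\phib$ is compatible with the interior differential $\delta\phib^{(l+1)}$ inherited from the previous inductive step.
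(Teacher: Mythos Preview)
Your argument is correct and complete; the key uses of $\HHH^{l+1}(S^{n-1})=0$ (with the integral-vanishing argument at the top degree) and $\HHH^{l+1}(M,\partial M)=0$ are placed exactly where they need to be, and the normalization $\bar\phib^{(l)}(\mathrm{id},\dots,\mathrm{id})=0$ is a clean way to obtain condition~(2).

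The organization, however, genuinely differs from the paper's. The paper does not first build an independent zig-zag on $\partial M$. Instead, at each level it first produces a preliminary $\psib^{(l)}$ on $M$ satisfying the zig-zag relation $\delta\phib^{(l+1)}+d''\psib^{(l)}=0$ (using $\HHH^{l}(M)=\HHH^{l}(M,\partial M)=0$), and only afterward modifies $\psib^{(l)}$ by a $d''$-exact term: it fixes a representative tuple $(g_1,\dots,g_{n-l-1})$ in each boundary-equivalence class and, for every other tuple $(h_1,\dots,h_{n-l-1})$ with the same boundary restrictions, adds $d''\zeta$ so that $i^*\phib^{(l)}(h_\ast)=i^*\psib^{(l)}(g_\ast)$; the existence of such $\zeta$ comes from $\HHH^{l}(S^{n-1})=0$ applied to the closed form $i^*\psib^{(l)}(g_\ast)-i^*\psib^{(l)}(h_\ast)$. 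Your approach reverses the order: you solve the zig-zag on $\partial M$ once and for all (defining cochains on $\Dbdr$), and then extend inward using the relative vanishing $\HHH^{l+1}(M,\partial M)=0$. The advantage of your route is that condition~(3) is automatic from the matching identity $i^*\phib^{(l)}=\bar\phib^{(l)}\circ r$, and you never need $\HHH^{l}(M)=0$ as a separate input. The paper's route stays closer to the construction of $c(M)$ in Section~3, treating the full-group case as a perturbation of the relative-group case. Either way the same cohomological inputs are consumed.
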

On the double complex $\mathscr{A}^{*,*}(\MDfuldel_{\bullet})$, the condition (1) in Lemma \ref{lem:phib_boundary_coincide} is depicted as follows.
\begin{align*}
\xymatrix{
\omb^{(n)} &&&&& \\
\phib^{(n-1)} \ar[u]_-{d''} \ar[r]^-{\delta} &0&&&& \\
& \phib^{(n-2)} \ar[u]_-{d''} \ar[r]^-{\delta} &\ar@{.}[rrd]&&& \\
&&&&& \\
&&&& \phib^{(0)} \ar[u]_-{d''} \ar[r]^-{\delta} & \delta \phib^{(0)}.
}
\end{align*}


\begin{proof}[Proof of Lemma \ref{lem:phib_boundary_coincide}]

  By $\HHH^{n}(M) = 0$, we take $\phi \in \Omega^{n-1}(M)$ such that $d\phi = \omega$.
  Let $\phib^{(n-1)}$ be the element corresponding to $\phi$ under the canonical isomorphism $\Omega^{n-1}(M) \cong \mathscr{A}^{0,n-1}(\MDfuldel_{\bullet})$.
  By $d'' \delta \phib^{(n-1)} = - \delta d'' \phib^{(n-1)} = 0$ and $\HHH^{n-1}(M) = \HHH^{n-1}(M,\partial M) = 0$, we take an element $\psib^{(n-2)}$ of $\mathscr{A}^{1,n-2}(\MDfuldel_{\bullet})$ such that $\delta \phib^{(n-1)} + d'' \psib^{(n-2)} = 0$
  and that $\psib^{(n-2)}(g) \in \Omega^{n-2}(M, \partial M)$ for every $f \in \DM$.

  For $l < n-1$, we define $\phib^{(l)}$ and $\psib^{(l-1)}$ inductively as follows.
  Assume that $\phib^{(l+1)}$ satisfies the conditions in (2) and (3) and there exists $\psib^{(l)} \in \mathscr{A}^{n-l-1,l}(\MDfuldel_{\bullet})$ such that $\delta \phib^{(l+1)} + d''\psib^{(l)} = 0$ and that $\psib^{(l)}(f_1, \cdots, f_{n-l-1}) \in \Omega^{l}(M, \partial M)$ for every $(f_1, \cdots, f_{n-l-1}) \in \DM^{n-l-1}$.
  We fix $(g_1, \cdots, g_{n-l-1}) \in \DMful^{n-l-1}$, and take $(h_1 \cdots, h_{n-l-1}) \in \DMful^{n-l-1}$ satisfying $g_k|_{\partial M} = h_k|_{\partial M}$ for every $1 \leq k \leq n-l-1$.
  Then we have
  \begin{align*}
    &d''i^*\left(\psib^{(l)}(g_1, \cdots, g_{n-l-1}) - \psib^{(l)}(h_1, \cdots, h_{n-l-1})\right)\\
    =& -i^* \left(\delta\phib^{(l+1)}(g_1, \cdots, g_{n-l-1}) - \delta\phib^{(l+1)}(h_1, \cdots, h_{n-l-1})\right) = 0.
  \end{align*}

  If $l > 0$, then there exists $\zeta' \in \Omega^{l-1}(\partial M)$ such that
  \begin{align}\label{align:replacement}
    d''\zeta' = i^*\psib^{(l)}(g_1, \cdots, g_{n-l-1}) - i^*\psib^{(l)}(h_1, \cdots, h_{n-l-1})
  \end{align}
  since $\HHH^{l}(\partial M) = \HHH^{l}(S^{n-1}) = 0$.
  Since $i^* \colon \Omega^{l-1}(M) \to \Omega^{l-1}(\partial M)$ is surjective, we take $\zeta \in \Omega^{l-1}(M)$ such that $i^*\zeta = \zeta'$.
  We set $\phib^{(l)}(h_1, \cdots, h_{n-l}) = \psib^{(l)}(h_1, \cdots, h_{n-l}) + d''\zeta$.
  Then $\phib^{(l)}$ satisfies the conditions in (2) and (3) by (\ref{align:replacement}).
  Since $d'' \delta \phib^{(l)} = - \delta d'' \phib^{(l)} = -\delta d''(\psib^{(l)} + d''\zeta) = \delta \delta \phib^{(l+1)} = 0$ and $\HHH^{l}(M) = \HHH^{l}(M,\partial M) = 0$, there exists $\psib^{(l-1)}$ such that $\delta \phib^{(l)} + d'' \psib^{(l-1)} = 0$
  and that $\psib^{(l-1)}(f_1, \cdots, f_{n-l}) \in \Omega^{l-1}(M, \partial M)$ for every $(f_1, \cdots, f_{n-l}) \in \DM^{n-l}$.

  If $l = 0$, then there exists a constant $a$ such that
  \begin{align*}
    a = i^*\psib^{(0)}(g_1, \cdots, g_{n-1}) - i^*\psib^{(0)}(h_1, \cdots, h_{n-1}).
  \end{align*}
  We set $\phib^{(0)}(h_1, \cdots, h_{n-1}) = \psib^{(0)}(h_1, \cdots, h_{n-1}) + a$.
  Then $\phib^{(0)}$ satisfies the conditions in (2) and (3).
  The resulting $\phib = \phib^{(0)} + \cdots +  \phib^{(n-1)}$ satisfies (1), (2) and (3).
\end{proof}

Recall that $r \colon B\DMful^{\delta} \to B\Dbdr^{\delta}$ is the map induced by the boundary-restricting map.

\begin{lemma}\label{lem:delphi_descend}
  Let $\phib = \phib^{(0)} + \cdots + \phib^{(n-1)}$ be as in Lemma \ref{lem:phib_boundary_coincide}.
  Then there exists a unique closed $n$-form $\chi \in \mathscr{A}^{n}(B\Diff_0(\partial M)^{\delta}_{\bullet})$ such that $r^* \chi = \delta \phib^{(0)}$.
\end{lemma}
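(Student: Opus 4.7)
The plan is to show that $\delta\phib^{(0)}(g_1,\ldots,g_n) \in C^\infty(M)$ is actually a constant depending only on the boundary restrictions $g_k|_{\partial M}$. Once this is established, existence and uniqueness of $\chi$ follow from the surjectivity of the restriction homomorphism $\DMful \to \Dbdr$ (a standard fact: extend a smooth isotopy of $\partial M$ through a collar with a cutoff, then correct it to be volume-preserving by Moser's theorem).

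First I would check that $\delta \phib^{(0)}(g_1,\ldots,g_n)$ is constant on $M$. Matching bidegree $(n-1,1)$ in the equation $d\phib = \omb^{(n)} + \delta \phib^{(0)}$ of Lemma \ref{lem:phib_boundary_coincide}(1) yields $d''\phib^{(0)} + \delta \phib^{(1)} = 0$, and hence
\[
d''(\delta \phib^{(0)}) = -\delta(d''\phib^{(0)}) = \delta^{2}\phib^{(1)} = 0.
\]
Since $\delta \phib^{(0)} \in \mathscr{A}^{n,0}(\MDfuldel_{\bullet})$ is valued in $\Omega^{0}(M) = C^{\infty}(M)$ and $d''$ on that bidegree is (up to sign) the de Rham differential on functions, $\delta \phib^{(0)}(g_1,\ldots,g_n)$ is locally constant, hence constant by connectedness of $M$.

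Next I would expand
\[
\delta \phib^{(0)}(g_1,\ldots,g_n) = \phib^{(0)}(g_2,\ldots,g_n) + \sum_{i=1}^{n-1}(-1)^{i}\phib^{(0)}(g_1,\ldots,g_i g_{i+1},\ldots,g_n) + (-1)^{n}\phib^{(0)}(g_1,\ldots,g_{n-1})^{g_n}
\]
and pull back along $i \colon \partial M \to M$; since the left-hand side is constant on $M$, its value equals $i^{*}\delta\phib^{(0)}(g_1,\ldots,g_n)$ at any point of $\partial M$. Condition (3) of Lemma \ref{lem:phib_boundary_coincide} applied to $\phib^{(0)}$ ensures that each of the first $n$ terms on the right, after applying $i^{*}$, depends only on the boundary restrictions $g_k|_{\partial M}$. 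For the last term, $g_n$ preserves $\partial M$, so $i^{*}(\phib^{(0)}(g_1,\ldots,g_{n-1})^{g_n}) = (g_n|_{\partial M})^{*}\, i^{*}\phib^{(0)}(g_1,\ldots,g_{n-1})$, and this again depends only on the boundary restrictions by (3).

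Consequently, setting $\chi(f_1,\ldots,f_n) = \delta \phib^{(0)}(g_1,\ldots,g_n)$ for any lifts $g_k \in \DMful$ with $g_k|_{\partial M} = f_k$ produces a well-defined element $\chi \in \mathscr{A}^{n}(B\Dbdr^{\delta}_{\bullet})$ satisfying $r^{*}\chi = \delta \phib^{(0)}$. Closedness $\delta \chi = 0$ then follows from $r^{*}\delta \chi = \delta r^{*}\chi = \delta^{2}\phib^{(0)} = 0$ together with the injectivity of $r^{*}$ coming from surjectivity of $\DMful \to \Dbdr$; the same injectivity gives uniqueness. The main obstacle I anticipate is the descent step in the preceding paragraph, which relies crucially on condition (3) of Lemma \ref{lem:phib_boundary_coincide}—the reason the zig-zag construction of $\phib$ was carefully corrected by the $d''\zeta$ terms in its inductive step.
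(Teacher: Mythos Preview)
Your proof is correct and follows essentially the same route as the paper: first observe that $d''\delta\phib^{(0)}=0$ forces $\delta\phib^{(0)}(g_1,\ldots,g_n)$ to be constant on $M$, then argue that this constant depends only on the boundary restrictions $g_k|_{\partial M}$, and finally define $\chi$ via lifts using surjectivity of $\DMful \to \Dbdr$. Your write-up is in fact more explicit than the paper's on two points: you spell out the term-by-term pullback of $\delta\phib^{(0)}$ to $\partial M$, and you correctly invoke condition~(3) of Lemma~\ref{lem:phib_boundary_coincide} for the descent step (the paper cites condition~(2), but as you recognize it is really the boundary-coincidence property~(3) for $\phib^{(0)}$ that makes each term of $i^*\delta\phib^{(0)}$ depend only on the $g_k|_{\partial M}$).
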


\begin{proof}
  Since $d'' \delta \phib^{(0)} = 0$, the element $\delta \phib^{(0)}(g_1, \cdots, g_n) \in \Omega^0(M)$ is a constant function for every $(g_1, \cdots, g_n) \in \DMful^{n}$.
  By (2) in Lemma \ref{lem:phib_boundary_coincide}, the constant $\delta \phib^{(0)}(g_1, \cdots, g_n)$ depends only on $(g_1|_{\partial M}, \cdots, g_n|_{\partial M})$.
  Hence, for every $(\gg_1, \cdots, \gg_n) \in \Diff_0(\partial M)^{n}$, we set
  \begin{align*}
    \chi(\gg_1, \cdots, \gg_n) = \delta \phib^{(0)}(g_1, \cdots, g_n),
  \end{align*}
  where $g_k$ is an element of $\DMful$ such that $g_{k}|_{\partial M} = \gg_k$ for every $1 \leq k \leq n$.
  It is easily verified that such $\chi$ is unique and a closed form.
\end{proof}

\begin{definition}\label{def:u}
  Let $\phib = \phib^{(0)} + \cdots + \phib^{(n-1)}$ be as in Lemma \ref{lem:phib_boundary_coincide}.  
  We set
  \begin{align*}
    u = \int_M \phib^{(0)} \wedge \omb^{(n)} \in \mathscr{A}^{n-1}(\BDfuldelM_{\bullet}).
  \end{align*}
\end{definition}

For the fibration $B\DM^{\delta} \xrightarrow{j} B\DMful^{\delta} \xrightarrow{r} B\Dbdr^{\delta}$, we obtain the following transgression formula.
\begin{theorem}[transgression formula]\label{thm:transgression}
  Let $\phib = \phib^{(0)} + \cdots + \phib^{(n-1)} \in \mathscr{A}^{n-1}(\MDfuldel_{\bullet})$ be as in Lemma \ref{lem:phib_boundary_coincide}.
  Let $\chi \in \mathscr{A}^{n}(B\Diff_0(\partial M)^{\delta}_{\bullet})$ and $u \in \mathscr{A}^{n-1}(\BDfuldelM_{\bullet})$ be as in Lemma \ref{lem:delphi_descend} and Definition \ref{def:u}, respectively.
  Then the following hold:
  \begin{align*}
    [j^*u] = c(M) \ \text{ and } \  d u = \vol(M) \cdot r^* \chi.
  \end{align*}
  In particular, the class $c(M)$ transgresses to $\vol(M) \cdot [\chi]$.
\end{theorem}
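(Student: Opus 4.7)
The plan is to verify the two displayed identities; the transgression conclusion then follows immediately from the definition recalled in Section~\ref{sec:intro}, applied to the fibration $B\DM^\delta \xrightarrow{j} B\DMful^\delta \xrightarrow{r} B\Dbdr^\delta$ (together with Theorem~\ref{thm:simplicial_derham_thm} to pass between simplicial de~Rham cochains and singular cochains).

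For $du = \vol(M)\cdot r^*\chi$, each $\BDfuldelM_p$ is $0$-dimensional, so $d''u = 0$ and $du = \delta u$. Commuting $\delta$ past the fiber integration and applying the Leibniz rule~\eqref{product_differential} gives
\begin{align*}
  \delta u = \int_M \delta(\phib^{(0)}\wedge\omb^{(n)}) = \int_M \bigl(\delta\phib^{(0)}\wedge\omb^{(n)} + (-1)^{n-1}\phib^{(0)}\wedge\delta\omb^{(n)}\bigr).
\end{align*}
The second term vanishes because every $g\in\DMful$ preserves $\omega$, so $\delta\omb^{(n)}=0$. Inspecting the $\mathscr{A}^{n-1,1}$-component of $d\phib = \omb^{(n)} + \delta\phib^{(0)}$ yields $d''\phib^{(0)} = -\delta\phib^{(1)}$, whence $d''\delta\phib^{(0)} = -\delta d''\phib^{(0)} = 0$, so $\delta\phib^{(0)}(g_1,\dots,g_n)$ is a constant function on the connected manifold $M$. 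By Lemma~\ref{lem:delphi_descend}, this constant equals $r^*\chi(g_1,\dots,g_n)$; integrating it against $\omega$ produces the factor $\vol(M)$, yielding the claimed formula.

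For $[j^*u] = c(M)$, I would check that $\iota^*\phib$ is itself an admissible choice of the cochain $\phi$ used in Section~\ref{sec:cocycle} to represent $c(M)$. Property~(2) of Lemma~\ref{lem:phib_boundary_coincide} directly supplies the relative condition $\iota^*\phib^{(l)}\in\Omega^l(M,\partial M)$ for every $l<n-1$, so the only remaining point is $d(\iota^*\phib) = \omega^{(n)}$, equivalently $\iota^*\delta\phib^{(0)} = 0$. As above, $\iota^*\delta\phib^{(0)}$ is a constant function on $M$, so it suffices to show it vanishes on $\partial M$. For $(g_1,\dots,g_n)\in\DM^n$, every summand in $\delta\phib^{(0)}(g_1,\dots,g_n)$ is either $\phib^{(0)}$ evaluated on a sub-tuple in $\DM^{n-1}$ or such a value pulled back by $g_n\in\DM$; property~(2) with $l=0$ gives vanishing on $\partial M$ of the former, and since $g_n$ fixes $\partial M$ pointwise, the pullback preserves this vanishing. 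The alternating sum thus restricts to $0$ on $\partial M$, forcing the constant to be $0$. Hence $\iota^*\phib$ is a valid $\phi$, and one reads off
\[
  c(M) = \Bigl[\int_M \iota^*\phib^{(0)}\wedge\omega^{(n)}\Bigr] = [j^*u].
\]

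I do not anticipate any substantial obstacle in this argument: the nontrivial content has been packaged into Lemmas~\ref{lem:phib_boundary_coincide} and~\ref{lem:delphi_descend}, and the remaining ingredients are formal, namely volume preservation giving $\delta\omb^{(n)}=0$, top-dimensionality of $\omega$ on $M$ collapsing all wedge products in $\phib\wedge\omb^{(n)}$ onto the single term $\phib^{(0)}\wedge\omb^{(n)}$, and the constancy of $\delta\phib^{(0)}$ in the $M$-variable provided by the zig-zag relation $d''\phib^{(0)} = -\delta\phib^{(1)}$.
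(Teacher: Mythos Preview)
Your proposal is correct and follows essentially the same approach as the paper's own proof, which is extremely terse: it simply states $du=\int_M\delta\phib^{(0)}\wedge\omb^{(n)}$ and invokes Lemma~\ref{lem:delphi_descend} for the first identity, and invokes Lemma~\ref{lem:phib_boundary_coincide}(2) together with the definition of $c(M)$ for the second. You have unpacked the same reasoning in more detail---in particular, the verification that $j^*\delta\phib^{(0)}=0$ (constant and vanishing on $\partial M$) is exactly the observation made just before the definition of $c(M)$ in Section~\ref{sec:cocycle}, so the paper treats it as already known. One notational point: you write $\iota^*\phib$, but $\iota$ in this paper denotes the inclusion $\DN\hookrightarrow\DM$; the map you mean is the one induced by $\DM\hookrightarrow\DMful$, i.e.\ $j^*$.
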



\begin{proof}
  Since $d u = \int_M \delta \phib^{(0)} \wedge \omb^{(n)}$ and $\delta \phib^{(0)} = r^* \chi$ by Lemma \ref{lem:delphi_descend}, we have
  \begin{align*}
    d u =  \vol(M) \cdot r^*\chi.
  \end{align*}
  By Lemma \ref{lem:phib_boundary_coincide} (2) and the definition of $c(M)$, the form $j^* u$ represents $c(M)$.
\end{proof}




Finally, we prove that the class $[\chi]$ coincides with $-\vol(M)$ times the Euler class of foliated $S^{n-1}$-bundles.
Let us consider the double complex $\mathscr{A}^{*, *}(S^{n-1} \DSdel_{\bullet})$ for the simplicial manifold $S^{n-1} \DSdel_{\bullet}$.
It is known that the spectral sequence $\{ E_r^{*,*}, d_r^{*,*} \}_{r\geq 1}$ of the double complex $\mathscr{A}^{*, *}(S^{n-1} \DSdel_{\bullet})$ is isomorphic to the Serre spectral sequence of
\begin{align}\label{univ_sphere_bundle}
  S^{n-1} \to E\DSdel \times_{\DSdel} S^{n-1} \to B\DSdel
\end{align}
from $E^2$-page (see \cite{losik93} for example).
Hence the Euler class $e \in \HHH^n(B\DSdel) \cong E_n^{n,0}$ of the bundle \eqref{univ_sphere_bundle} is given by $-d_n^{0,n-1}(\alpha)$, where $\alpha \in E_n^{0,n-1} \cong \HHH^{n-1}(S^{n-1})$ is the element corresponding to the generator of integral cohomology $\HHH^{n-1}(S^{n-1};\ZZ)$.

\begin{proposition}\label{prop:euler_cocycle}
  The class $[\chi] \in \HHH^{n-1}(\BDS)$ coincides with $-\vol(M) \cdot e$.
\end{proposition}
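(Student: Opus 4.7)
The plan is to build an explicit element $\tilde{\phib} \in \mathscr{A}^{n-1}(S^{n-1}\DSdel_\bullet)$ whose component in bidegree $(0,n-1)$ represents $\vol(M)\cdot\alpha \in E_n^{0,n-1}$ and which satisfies $d\tilde{\phib} = \pi^*\chi$, where $\pi \colon S^{n-1}\DSdel_\bullet \to \BDS_\bullet$ is the canonical projection. Granting such a $\tilde{\phib}$, the standard zig-zag description of the transgression $d_n^{0,n-1}$ in the double complex $\mathscr{A}^{*,*}(S^{n-1}\DSdel_\bullet)$ gives $d_n(\vol(M)\cdot\alpha) = [\pi^*\chi]$ in $E_n^{n,0}$, and the recalled formula $e = -d_n(\alpha)$ then yields $[\chi] = -\vol(M) \cdot e$. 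The element $\tilde{\phib}$ will be obtained by pushing $\phib$ of Lemma~\ref{lem:phib_boundary_coincide} from $M$ down to its boundary.

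First I would descend $i^*\phib$ to $\tilde{\phib} = \tilde{\phib}^{(0)} + \cdots + \tilde{\phib}^{(n-1)}$ with $\tilde{\phib}^{(l)} \in \mathscr{A}^{n-1-l,\,l}(S^{n-1}\DSdel_\bullet)$. For each $l \leq n-2$, Lemma~\ref{lem:phib_boundary_coincide}~(3) ensures that $i^*\phib^{(l)}(g_1,\ldots,g_{n-l-1}) \in \Omega^l(S^{n-1})$ depends only on $g_k|_{\partial M}$, so it factors through a cochain on $\DS^{n-l-1}$. For $l = n-1$, the form $\phib^{(n-1)} \in \Omega^{n-1}(M)$ has no group arguments, so $i^*\phib^{(n-1)}$ is simply a single $(n-1)$-form on $S^{n-1}$.

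Next I would apply $i^*$ to $d\phib = \omb^{(n)} + \delta\phib^{(0)}$. Because $i^*$ commutes with both $\delta$ and $d''$, and the descent procedure inherits this compatibility, the equation breaks up by bidegree on $S^{n-1}\DSdel_\bullet$: in bidegree $(0,n)$ one has $d''\tilde{\phib}^{(n-1)} = i^*\omb^{(n)} = 0$ by dimension; in bidegrees $(l, n-l)$ with $1 \leq l \leq n-1$ one inherits the zig-zag identities $\delta\tilde{\phib}^{(n-l)} + d''\tilde{\phib}^{(n-l-1)} = 0$; and in bidegree $(n,0)$ one finds $\delta\tilde{\phib}^{(0)} = \pi^*\chi$, which is exactly the construction of $\chi$ in Lemma~\ref{lem:delphi_descend}. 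Thus $d\tilde{\phib} = \pi^*\chi$. Then Stokes applied to $d\phib^{(n-1)} = \omega$ yields $\int_{S^{n-1}} i^*\phib^{(n-1)} = \int_M \omega = \vol(M)$, so $[\tilde{\phib}^{(n-1)}] = \vol(M)\cdot\alpha$ in $E_n^{0,n-1} \cong \HHH^{n-1}(S^{n-1};\RR)$.

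Assembling these pieces, $\tilde{\phib}$ is a bona fide zig-zag in $\mathscr{A}^{*,*}(S^{n-1}\DSdel_\bullet)$ witnessing $d_n(\vol(M)\cdot\alpha) = [\pi^*\chi] \in E_n^{n,0}$. Under the isomorphism between the $E_2$-page of this spectral sequence and that of the Serre spectral sequence of~\eqref{univ_sphere_bundle}, the class $[\pi^*\chi]$ corresponds to $[\chi] \in \HHH^n(\BDS)$, since constant-valued cochains already compute the bottom row $E_2^{*,0}$. The formula $e = -d_n(\alpha)$ then gives $[\chi] = -\vol(M)\cdot e$. The main technical obstacle will be sign bookkeeping: tracking the convention $d'' = (-1)^k d_{\mathrm{dR}}$ when pulling back zig-zag identities via $i^*$, verifying that the descent $i^*\phib \rightsquigarrow \tilde{\phib}$ is a chain map with respect to the total differential, and confirming that under the Serre identification the class $[\pi^*\chi]$ is literally $[\chi]$ and not merely a cohomologous representative.
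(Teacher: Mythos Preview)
Your proposal is correct and follows essentially the same approach as the paper: the paper constructs $\psib^{(l)}$ by descending $i^*\phib^{(l)}$ via Lemma~\ref{lem:phib_boundary_coincide}(3), obtains the same zig-zag $d\psib = \delta\psib^{(0)} = \chi$, computes $[\psib^{(n-1)}] = \vol(M)\cdot\alpha$ by Stokes, and concludes $[\chi] = d_n(\vol(M)\cdot\alpha) = -\vol(M)\cdot e$. Your $\tilde{\phib}$ is the paper's $\psib$, and your extra care about $\pi^*\chi$ versus $\chi$ and the sign conventions is welcome but does not constitute a different argument.
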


\begin{proof}
  We take $\omb^{(n)} \in \mathscr{A}^{0,n}(\MDfuldel_{\bullet})$ and $\phib = \phib^{(0)} + \cdots + \phib^{(n-1)} \in \mathscr{A}^{n-1}(\MDfuldel_{\bullet})$ as in Lemma \ref{lem:phib_boundary_coincide}.
  By Lemma \ref{lem:phib_boundary_coincide} (3), each pullback $i^*\phib^{(l)}$ to the boundary sphere descends to a form $\psib^{(l)} \in \mathscr{A}^{n-l-1, l}(S^{n-1}\DSdel_{\bullet})$.
  By Lemma \ref{lem:phib_boundary_coincide} (1), we have $d\psib = \delta \psib^{(0)}$.
  On the double complex $\mathscr{A}^{*,*}(S^{n-1}\DSdel_{\bullet})$, this is depicted as follows.
  \begin{align*}
  \xymatrix{
  0 &&&&& \\
  \psib^{(n-1)} \ar[u]_-{d''} \ar[r]^-{\delta} &0&&&& \\
  & \psib^{(n-2)} \ar[u]_-{d''} \ar[r]^-{\delta} &\ar@{.}[rrd]&&& \\
  &&&&& \\
  &&&& \psib^{(0)} \ar[u]_-{d''} \ar[r]^-{\delta} & \delta \psib^{(0)}
  }
  \end{align*}
  Note that $\delta \psib^{(0)} = \chi$ by the definition of $\chi$.
  By the definition of the spectral sequence of double complex, the zig-zag above implies that the form $\psib^{(n-1)}$ lives to $E_n^{0,n-1}$ and that the image $d_n^{0,n-1}([\psib^{(n-1)}])$ is equal to the class $[\chi] \in E_n^{n,0} = \HHH^n(\BDS)$ (see \cite[Section 14]{bott_tu82}).

  Recall that $\phib^{(n-1)} \in \mathscr{A}^{0,n-1}(\MDfuldel_{\bullet})$ is the element corresponding to $\phi \in \Omega^{n-1}(M)$ satisfying $d\phi = \omega$.
  By the Stokes formula, we have
  \begin{align*}
    \int_{S^{n-1}} i^* \phi = \int_M d\phi = \int_M \omega = \vol(M),
  \end{align*}
  which implies that $[\psib^{(n-1)}] = \vol(M) \cdot \alpha \in E_n^{0,n-1} \cong \HHH^{n-1}(S^{n-1})$.
  By the definition of the Euler class, we have
  \begin{align*}
    [\chi] = d_n^{0,n-1}([\psib^{(n-1)}]) = \vol(M)\cdot d_n^{0,n-1}(\alpha) = -\vol(M)\cdot e.
  \end{align*}
  This completes the proof.
\end{proof}


\begin{proof}[Proof of Theorem \ref{thm:cM_transgression}]
  Theorem \ref{thm:transgression} and Proposition \ref{prop:euler_cocycle} imply the theorem.
\end{proof}

\section*{Acknowledgments}
The author would like to thank Professor Hitoshi Moriyoshi for helpful discussions and comments.
The author is partially supported by JSPS KAKENHI Grant Number JP23KJ1938 and JP23K12971.

\bibliographystyle{amsalpha}
\bibliography{references.bib}
\end{document}